\documentclass[12pt]{elsarticle}
\usepackage{makecell} % Provides \Xhline
\usepackage{hhline}   % Provides \hhline
\usepackage{graphicx}
\usepackage{subcaption}
\usepackage{booktabs}
\usepackage{hhline}
\usepackage{epstopdf}
\usepackage{color}
\usepackage{latexsym}
\usepackage{setspace}
\usepackage{amsmath}
\usepackage{amssymb}
\usepackage[numbers]{natbib}
\usepackage{hyperref}
\usepackage{multirow}
\usepackage{multirow}
\hypersetup{
	colorlinks=true,
	linkcolor=blue,
	filecolor=blue,
	urlcolor=blue,
}

\makeatletter
\def\ps@pprintTitle{%
	\let\@oddhead\@empty
	\let\@evenhead\@empty
	\let\@oddfoot\@empty
	\let\@evenfoot\@oddfoot
}
\makeatother
\makeatletter \oddsidemargin  -.5cm \evensidemargin -.5cm \textwidth
17.5cm \topmargin -.65in \textheight 24cm

\newtheorem{defn}{Definition}[section]
\newcommand{\bd}{\begin{definition}}
	\newcommand{\ed}{\end{definition}}
\newcommand{\br}{\begin{remark}}
	\newcommand{\er}{\end{remark}}
\newcommand{\bea}{\begin{eqnarray}}
	\newcommand{\eea}{\end{eqnarray}}
\newcommand{\beann}{\begin{eqnarray*}}
	\newcommand{\eeann}{\end{eqnarray*}}
\newtheorem{theorem}{Theorem}[section]

\newtheorem{lemma}[theorem]{Lemma}

\newtheorem{corollary}[theorem]{Corollary}
\newtheorem{remark}{Remark}[section]
\newtheorem{example}{Example}[section]

\numberwithin{equation}{section} \numberwithin{equation}{section}
\title{Estimating ratio of variance components in a balanced one-way random effect model}
\author{Somnath Mondal and Lakshmi Kanta Patra\footnote
	{\baselineskip=10pt
		~lkpatra@iitbhilai.ac.in; patralakshmi@gmail.com}  \\
	Department of Mathematics\\
	Indian Institute of Technology Bhilai, Durg, India-491002}
\begin{document}
	\begin{frontmatter}
		\date{}
		%\maketitle
		\begin{abstract}
			This paper considers the problem of estimating the ratio of variance components of a balanced one-way random effects model. Existing estimators of ratio of variance, such as the ANOVA and maximum likelihood estimators, can take negative values and are inadmissible under quadratic loss. We therefore study the problem under a general class of scale-invariant loss functions. We characterize the class of scale-equivariant estimators of ratio of variance and obtain the best equivariant estimator. We then derive a class of estimators that dominate the BEE and extend this class to obtain further improved estimators. We propose a smooth improved estimator of ratio of variance. For interval estimation of ratio of variance, we derive an asymptotic confidence interval, bootstrap confidence intervals, a generalized confidence interval and a highest posterior density credible interval. The performance of the proposed improved point estimators and confidence interval procedures is assessed through an extensive simulation study. Finally, we provide a real data analysis for implementation purposes.\\		
			
			\noindent{\it\bf Keywords:} Random effects model; Variance component; Equivariant estimator; Scale-invariant loss function; Confidence interval.\\
		\end{abstract}
	\end{frontmatter}
	\section{Introduction} 
	
	The balanced one-way random effects model is widely used in scientific and industrial investigations. It applies when the observations arranged in groups, the groups are selected at random, and each group has the same number of observations. The model decomposes the total variation in the data into two parts: variation between groups and variation within groups. Many practical problems fit naturally into this setting. For example, in an agricultural field trial, farms may be selected at random and the same number of plots observed on each farm. The variability between farms come from environmental factors such as soil fertility and climate. Variability between plots on the same farm come from measurement error and small local changes in conditions. Similarly in manufacturing, production batches may be selected at random and the same number of items inspected from each batch. The between-batch variation shows how consistent the production process is. The within-batch variation reflects random production and measurement errors. A similar situation arises in quality assessment, where several measurements are taken on specimens from randomly chosen laboratories or production centres. In all these situations, the balanced one-way random effects model provides a natural framework for separating the two sources of variation. In this work, we consider the balanced one-way random effects model
	\begin{align}\label{main}
		y_{ij} = \mu + a_i + e_{ij}, \quad i = 1, \dots, n_1;\ j = 1, \dots, n_2,
	\end{align}
	where the $a_i$ are independent $N(0, \sigma_1^2)$ random variables, the $e_{ij}$ are independent $N(0, \sigma_2^2)$ random variables, and the $a_i$ and $e_{ij}$ are mutually independent. Here $-\infty < \mu < \infty$, $\sigma_1^2 \geq 0$ and $\sigma_2^2 > 0$. The parameter $\sigma_1^2$ is the between-group variance component and $\sigma_2^2$ is the within-group (error) variance component. Our aim is to derive a class of improved estimators of the ratio $\sigma_1^2/\sigma_2^2$. This model is one of the most widely used models in variance components analysis. It is used whenever data are collected from several groups and the observations vary both between groups and within each group. It arising in various applications include agriculture, industrial quality control, biological experiments, educational testing and reliability studies. The aim of this work is to derive improved estimators of the ratio $\sigma_1^2/\sigma_2^2$.
	
	The estimation of variance components in the one-way random effects model has been studied extensively in the literature. For example, \cite{klotz1969mean} and \cite{portnoy1971formal} studied the estimation of the variance components $\sigma_1^2$ and $\sigma_2^2$ under the mean squared error (MSE) criterion. However, comparatively little attention has been paid to the point estimation of the ratio $\sigma_r=\sigma_1^2/\sigma_2^2$ or equivalently $\lambda = 1 + n_2\sigma_r = E(\mbox{MSA})/E(\mbox{MSE})$ where $\mathrm{MSA}$ and $\mathrm{MSE}$ denote the between-group and within-group mean squares respectively. This ratio is of considerable practical importance. It is a one-to-one function of the intraclass correlation coefficient $\mbox{Corr}(y_{ij},y_{ij'})= \rho = \sigma_r/(1+\sigma_r)$, which measures the similarity of observations within the same group. Moreover, since the variance of the overall mean, $\operatorname{Var}(\bar{y}) = \sigma_2^2(1+n_2\sigma_r)/(n_1n_2)$, depends on the variance components essentially through $\sigma_r$, this ratio determines the optimal choice of $n_1$ and $n_2$ when planning future experiments (see \cite{scheffe1959analysis}, \cite{snedecor1980statistical}). Latter, \cite{loh1986improved} studied the estimation of the ratio of variance components in a one-way random effects model. He established the inadmissibility of the maximum likelihood (ML), restricted maximum likelihood (REML) and Bayes estimators of $\theta$ by constructing an improved optimal estimator, denoted by $\theta_{\mathrm{opt}}$. Motivated by this work \cite{das1990inadmissibility} studied the estimation of the ratio of variance components under squared error loss and proved the inadmissibility of the optimal estimator $\theta_{\mathrm{opt}}$ obtained by \cite{loh1986improved}. One of the main drawbacks of the usual estimators of $\sigma_r$ is that they can take negative values. \cite{das1992improved} showed that both the ANOVA estimator and the unbiased estimator of $\sigma_r^2$ can be negative, and obtained non-negative estimators that dominate the usual estimators under the mean squared error loss function. There are less work has been done using decision theoretic approach for this direction. \cite{srivastava1999improved} proposed non-negative definite estimators of the between- and within-group covariance components in the multivariate one-way random-effects model and showed that they dominate the best unbiased estimators under Stein's loss. \cite{kubokawa2006estimation} used the Stein–Haff identity to propose truncated estimators of covariance matrices and multivariate variance components in fixed- and mixed-effects linear models, and showed that they improve upon the usual unbiased estimators under suitable loss functions. \cite{kubokawa2007estimation} considered the simultaneous estimation of the regression coefficients and the error variance in a linear regression model under the Kullback–Leibler loss. They proposed truncated and shrinkage estimators that dominate the usual estimators and used them to derive AIC variants for model selection. For further studies in this direction under various settings, we refer to \cite{heine1993nonnegative},  \cite{kubokawa1995estimation}, \cite{kubokawa2000bayes}, \cite{wei2004empirical}, \cite{bobotas2017estimation}.

	Decision-theoretic methods have also been used extensively in recent years to estimate scale parameters and their functions in various probability models. We now briefly review some recent work in this direction. \cite{kayal2024estimating} studied the estimation of the scale parameters of several exponential distributions with ordered restriction under the linex loss function. They obtained a sufficient condition under which the equivariant class of estimators improves upon the usual estimators.  Specifically they obtained \cite{stein1964}-type and \cite{brewster1974improving}-type improved estimator. \cite{mondal2025estimating} studied the estimation of the ordered scale parameters of two scale mixtures of exponential distributions under the Stein and symmetric loss functions. They have shown the inadmissibility of best affine equivariant estimator and proposed a class of dominating estimators. Also they showed that the \cite{brewster1974improving}-type estimator is generalized Bayes estimator. \cite{mondal2026estimating} considered the estimation of positive power of ordered standard deviations of two normal population under a general loss function. They have used the techniques of \cite{stein1964}, \cite{kubokawa1994unified} and \cite{brewster1974improving} to find the improved estimators under a general scale invariant loss function. 
	
	Following this line of research, we study the estimation of the ratio of variance components in the balanced one-way random effects model (\ref{main}) from a decision-theoretic point of view. This estimation problem is studied under a general class of bowl-shaped loss functions $L(t)$. This loss function $L(t)$ must satisfy the following conditions:
	\begin{itemize}
		\item [(A1)] $L(t)$ is strictly convex with $L(1)=0$.
		\item [(A2)] The integrals involving $L(t)$ are finite and can be differentiated under the integral sign.
	\end{itemize}
	Since $\sigma_r = (\lambda - 1)/n_2$, then deriving an improved estimator of $\sigma_r$ is equivalent to finding an improved estimator of $\lambda$. Hence, our problem reduces to finding an improved estimator of $\lambda$ under the loss function $L(t)$. Our goal is to obtain estimators that dominate best equivariant estimator of $\lambda$. In particular, we proposed \cite{stein1964}-type and \cite{kubokawa1994double} IERD type estimator. Improved estimators have been obtained by many authors using these methods. For important applications, the reader may refer to \cite{misra2005estimation}, \cite{kayal2015estimating}, \cite{petropoulos2017estimation}, \cite{patra2021minimax}, \cite{patra2025inadmissibility} and the references cited therein.
	
	In addition to point estimation, we study interval estimation of $\sigma_r$ using four approaches: the asymptotic confidence interval, bootstrap confidence intervals, generalized confidence intervals, and the highest posterior density (HPD) credible interval based on the Markov chain Monte Carlo (MCMC) method. Interval estimation has been widely studied for various statistical models, and we briefly review some recent contributions. \cite{arendacka2005generalized} proposed generalized confidence intervals for a variance component in mixed linear models with two variance components and compared several test variables through simulation. \cite{burdick2006present} studied confidence interval estimation for one-factor random-effects models. They introduced a small-sample adjustment to improve the bootstrap intervals and discussed generalized confidence intervals based on the approach of \cite{weerahandi1993generalized}. \cite{ahmed2020inference} studied the Chen distribution under progressive Type-II censoring in the presence of competing risks and constructed asymptotic and bootstrap confidence intervals for the unknown parameters. \cite{ghanbari2022estimation} studied interval estimation of the stress–strength reliability $\mathcal{R}$ for Marshall–Olkin distributions based on progressively Type-II censored samples. They obtained an asymptotic confidence interval using the delta method, percentile and $t$-bootstrap confidence intervals, and an HPD credible interval based on MCMC samples. \cite{singhasomboon2023asymptotic} studied the confidence interval estimation for the ratio of the modes of two log-normal distributions. They obtained an asymptotic confidence interval along with three bootstrap-based intervals, namely, bootstrap-$t$, percentile bootstrap, and student's t bootstrap intervals. For further studies, the reader may refer to \cite{liu2016confidence} \cite{khatun2022hypothesis}, \cite{jena2023point}, \cite{sarkar2024estimating}, \cite{khatun2024inference} and the references cited therein.

	% Now observe that the model (\ref{main}) can be reduced to a canonical form by suitable choice of orthogonal transformations (see Appendix of \cite{das1990inadmissibility}). 
	
	We now reduce the model to a convenient form. By a suitable orthogonal transformation (see the Appendix of \cite{das1990inadmissibility}), model (\ref{main}) can be written in canonical form. This yields a new observation vector of length $n_1n_2$ observation vector $Z=(Z_{11},Z_{21},\dots,Z_{n_11},Z_{ij},\dots;j>1)'$ is such that 
	\begin{align}
		E(Z)=\lambda^{*}=(\sqrt{n_1n_2}\mu,0,\dots,0)',\\
		\mbox{Disp}(Z)=\Sigma=\mbox{Diag}(\sigma_2^2\lambda I_{n_1},\sigma_2^2I_{n_2-n_1}).
	\end{align}
	We consider the problem of estimating $\lambda$ with respect to the general loss function $L(t)$. The problem of estimating $\lambda$ remains invariant under the following group actions $\mathcal{G}$:
	\begin{equation}
		\begin{aligned}\label{2.3}
			Z &\rightarrow DO(Z+b), \quad 
			D= \begin{pmatrix} adI_{n_1} & 0 \\ 0 & aI_{n_2-n_1} \end{pmatrix}, \quad 
			b=(b_{11},0,\dots,0)', \quad a,d \neq 0,\\
			\Sigma &\rightarrow D\Sigma D, \quad 
			\lambda^* \rightarrow DO(\lambda + b), \quad 
			O \in \mathcal{O}= \text{the set of orthogonal matrices}.
		\end{aligned}
	\end{equation}
	Then the equivariant estimator $\delta$ would be such that 
	\begin{equation}\label{2.4}
		\delta(DO(Z+b))=d^2 \delta(Z).
	\end{equation}
	Take $a=d=1$, that is $D=I$, we see that 
	\begin{equation*}
		\delta(O(Z+b))= \delta(Z)
	\end{equation*}
	for all orthogonal $O\in \mathcal{O}$. This implies the $\delta(Z)$ depends on $Z$ only through $Z_{11},$ $S_1$ and $S_2$ that is 
	\begin{equation}\label{2.5}
		\delta(Z)=\phi((Z_{11}+b_{11}), S_1,S_2)
	\end{equation}
	where $S_1=\sum_{i=1}^{n_1}Z_{ij}^2=n_2\sum_{i=1}^{n_1}(\bar{y}_i-\bar{y})^2$, $S_2=\sum_{i=1}^{n_1}\sum_{j=2}^{n_2}Z_{ij}^2=\sum_{i=1}^{n_1}\sum_{j=1}^{n_2}(y_{ij}-\bar{y}_i)^2$.
	Thus from (\ref{2.4}) using (\ref{2.5}), we get
	\begin{equation}
		\phi(ad(Z_{11}+b_{11}),a^2d^2S_1,a^2S_2)=d^2\phi((Z_{11}+b_{11}), S_1,S_2).
	\end{equation}
	Hence setting $b_{11}=-Z_{11}$, $a^2=1/S_2$, $d^2=S_2/S_1$, we obtain
	\begin{equation}\label{be}
		\delta(Z)=\phi(S_1,S_2)=c\frac{S_1}{S_2},~~ \mbox{where c is a positive real number.}
	\end{equation}
	Thus any $\mathcal{G}$-equivariant estimator $\phi$ would be of the form $cS_1/S_2$ where $c$ is some positive constant. Note that $S_1$ and $S_2$ are independent. Furthermore $S_1\sim\sigma_2^2\lambda\chi^2_{p_1}, ~ S_2\sim\sigma_2^2\chi^2_{p_2} ~~ \mbox{with} ~ p_1=n_1-1, ~ p_2=(n_2-1)n_1.$ The best equivariant estimator (BEE) of $\lambda$ under the loss function $L(t)$ is obtained from the following lemma.
	\begin{lemma}
		Under the a scale invariant loss function $L(t)$, the BEE of $\lambda$ is given by $\delta_0=c_0\frac{S_1}{S_2}$ where $c_0$ is the unique solution of 
		\begin{equation}\label{bae}
			EL'(c_0V)V=0
		\end{equation}
		where $V=\frac{S_1}{\lambda S_2}\sim \frac{p_1}{p_2}F_{p_1,p_2}$ and $L'(\cdot)$ denotes the derivative of the loss function $L(\cdot)$.
	\end{lemma}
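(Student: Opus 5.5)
The plan is to reduce the problem to a one-dimensional convex minimization over the constant $c$, using the characterization (\ref{be}) that every $\mathcal{G}$-equivariant estimator has the form $cS_1/S_2$ with $c>0$. For such an estimator the risk under the scale invariant loss is
\[
R(\lambda,\sigma_2^2;c)=E\,L\!\left(\frac{cS_1}{\lambda S_2}\right)=E\,L(cV),
\]
where $V=S_1/(\lambda S_2)$. Since $S_1/(\sigma_2^2\lambda)\sim\chi^2_{p_1}$ and $S_2/\sigma_2^2\sim\chi^2_{p_2}$ are independent, $V=\frac{p_1}{p_2}\cdot\frac{\chi^2_{p_1}/p_1}{\chi^2_{p_2}/p_2}\sim\frac{p_1}{p_2}F_{p_1,p_2}$. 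This distribution is free of $(\mu,\sigma_2^2,\lambda)$. Hence the risk is constant in the parameters, and the BEE is obtained by minimizing $g(c)=E\,L(cV)$ over $c>0$.

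Next I would show that $g$ is strictly convex on $(0,\infty)$. For $c_1\neq c_2$ and $t\in(0,1)$, strict convexity in (A1) gives, pointwise for every $V>0$ (so that $c_1V\neq c_2V$),
\[
L((tc_1+(1-t)c_2)V)<tL(c_1V)+(1-t)L(c_2V).
\]
Taking expectations, which are finite by (A2), yields strict convexity of $g$, and so $g$ has at most one minimizer. By (A2) we may differentiate under the integral sign, giving $g'(c)=E\,L'(cV)V$. A minimizer $c_0$ must therefore satisfy (\ref{bae}), and conversely any root of (\ref{bae}) is the unique minimizer, because $g$ is convex. Uniqueness of the root also follows directly: $g'$ is strictly increasing, since $L'$ is strictly increasing and $V>0$.

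The main obstacle is showing that a root of (\ref{bae}) actually exists, that is, that $g'$ changes sign on $(0,\infty)$. I would use that $L$ is strictly convex with minimum at $1$ ($L(1)=0$, bowl-shaped), so $L'(t)<0$ for $t<1$ and $L'(t)>0$ for $t>1$. As $c\to0^+$, we have $cV<1$ with probability tending to one, and $L'(cV)V$ becomes negative. Monotone convergence, or dominated convergence via (A2), then gives $\lim_{c\to0}g'(c)<0$. Similarly, as $c\to\infty$, $L'(cV)V$ becomes positive and increasing, so $\lim g'(c)>0$. Continuity of $g'$, which comes from differentiability under the integral sign, then gives a root by the intermediate value theorem.

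Since the paper's assumptions are informal, I would state explicitly that existence relies on (A2) together with the bowl shape of $L$ around $1$. Combining the steps, $\delta_0=c_0S_1/S_2$ minimizes the constant risk among all equivariant estimators, so it is the BEE.
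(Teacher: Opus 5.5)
Your argument is correct and is the standard reasoning the paper relies on implicitly; the paper states the lemma without proof, right after showing that every $\mathcal{G}$-equivariant estimator has the form $cS_1/S_2$. The steps are: the risk $E\,L(cV)$ is free of the parameters because $V=\chi^2_{p_1}/\chi^2_{p_2}$, strict convexity of $c\mapsto E\,L(cV)$ gives uniqueness, and differentiating under the integral (A2) gives the defining equation. Your existence step, using monotone convergence and the sign change of $L'$, is a useful addition the paper omits. You are right that it needs the bowl shape, i.e.\ the minimum of $L$ at $t=1$, which the paper invokes informally but does not list in (A1).
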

	\begin{example}
		\begin{itemize}
			\item [(i)]	Let $L_1(t)=(t-1)^2$. The BEE of $\lambda$ is obtain as $\delta_{01}=\frac{p_2-4}{p_1+2}\frac{S_1}{S_2}$ provide $p_2>4$.
			\item [(ii)] Let $L_2(t)=t-\ln t -1$. Then BEE of $\lambda$ is obtained as $\delta_{02}=\frac{p_2-2}{p_1}\frac{S_1}{S_2}$ provided $p_2>2$.
			\item [(iii)] Let $L_3(t)=t+1/t -2$. Then BEE of $\lambda$ is obtained as $\delta_{03}=\left(\frac{p_2(p_2-2)}{p_1(p_1-2)}\right)^{1/2}\frac{S_1}{S_2}$ provided $p_1,p_2>2$.
		\end{itemize}
	\end{example}
	\begin{remark}
		Note that the BEE of $\lambda$ under quadratic loss function $L_1(t)$ is coincides with the the result obtained in Section $2$ of \cite{das1990inadmissibility}.
	\end{remark}
	\begin{remark}
		The UMVUE of $\lambda$ is $\delta_{U}=\frac{p_2-2}{p_1}\frac{S_1}{S_2}$ which is coincide with the BEE of $\delta_{U}$ of $\lambda$ under the entropy loss function $L_2(t)$ and hence it belongs to the class $(\ref{be})$. 
	\end{remark}
	
	The rest of the paper is organized as follows. In Section \ref{se3}, we obtain a class of improved estimators that dominate the BEE. In Section \ref{se4}, we present a smooth improved estimator of $\lambda$. To compare the performance of the obtained improved estimator of $\lambda$, we conducted an extensive simulation study in Section \ref{sim1}. In Section \ref{se5}, we obtain asymptotic confidence intervals (ACIs), bootstrap confidence intervals, generalized confidence intervals, and highest posterior density (HPD) intervals for the ratio $\sigma_r$. A simulation study to evaluate the performance of these confidence intervals is presented in Section \ref{se6}. Finally, Section \ref{da} presents a real-data analyses to illustrate the derived results.

	\section{Improved estimation of $\lambda$}\label{se3}
	In this section we will derived an improved estimators of $\lambda$ which dominates over BEE. To obtain the class of improved estimators we applying the \cite{stein1964} suggestion by considering estimators which are equivariant estimator with respect to the subgroup $\mathcal{H} ( \subset\mathcal{G} )$ which can be find by using the following group action $Z\rightarrow DOZ,~\Sigma \rightarrow D\Sigma D,~\lambda^*\rightarrow DO\lambda^*,$
	%	\begin{align*}
		%	Z\rightarrow DOZ, ~~~~\Sigma \rightarrow D\Sigma D, ~~~~~ \lambda^*\rightarrow DO\lambda^*,&&&&
		%	\end{align*}
	where $D$ and $\Sigma$ are given in (\ref{2.3}). Proceeding as in the case of construction of $\mathcal{G}$-equivariant estimator, here an $\mathcal{H}$ equivariant estimator would be of the form
	\begin{align}
		\delta_{\phi_1}=\varphi_1(W)\frac{S_1}{S_2}
	\end{align}
	where $\psi_1\left(\frac{S_1}{S_1+T}\right)\frac{S_1+T}{S_2}=\frac{\psi_1\left(W\right)}{W}\frac{S_1}{S_2}=\varphi_1(W)\frac{S_1}{S_2}$ with $W=\frac{S_1}{S_1+T}$, 
	\begin{align}\label{3.2}
		T=Z^2_{11}=n_1n_2\bar{y}^2\sim\sigma_2^2\lambda\chi^2_1(\eta),~~~ \eta=n_1n_2\mu^2/(\sigma_2^2\lambda).
	\end{align}
	
	The following theorem states that there an $\mathcal{H}$-equivariant estimator of $\lambda$ which is better than the BEE.
	\begin{theorem}
		Let $\alpha_1$ be the unique solution of the equation
		\begin{align}\label{3.3}
			E\left[L'\left(\alpha_1X\right)\right]=0
		\end{align} where $X\sim \text{Beta} \left(\frac{p_1+3}{2}, \frac{p_2-2}{2}\right)$. Then the risk of the estimator $\delta_{\phi_{01}(W)}$ is nowhere larger than that of the estimator $\delta_{\phi_1}$ provided $P\left(\phi_1(W)>d_0(W)\right)\neq 0$.
	\end{theorem}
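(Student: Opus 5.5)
The plan is the Stein (1964) conditional-risk argument: fix $W=w$, minimise the conditional risk over the multiplier, and show that the minimiser never exceeds $d_0(w)$, for every value of the noncentrality $\eta$. I read the improved estimator $\delta_{\phi_{01}}$ as $\phi_{01}(W)\,S_1/S_2$ with
\[
\phi_{01}(W)=\min\{\phi_1(W),d_0(W)\},
\]
where $d_0(w)$ is the conditional minimiser at $\eta=0$, to be expressed through $\alpha_1$ of (\ref{3.3}). The hypothesis $P(\phi_1(W)>d_0(W))\neq 0$ is exactly the condition that this truncation from above changes $\delta_{\phi_1}$ on a set of positive probability.

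\emph{Step 1 (reduction).} Scale invariance of $L$ lets me take $\sigma_2^2=1$ and divide by $\lambda$. Write $S_1=\lambda U_1$, $T=\lambda U_0$ with $U_1\sim\chi^2_{p_1}$, $U_0\sim\chi^2_1(\eta)$ and $S_2\sim\chi^2_{p_2}$, all independent. The risk of $\delta_{\phi_1}$ becomes
\[
E\,E\Big[L\big(\phi_1(W)\,U_1/S_2\big)\,\Big|\,W\Big].
\]
For fixed $w$, define $R_\eta(c,w)=E[L(cU_1/S_2)\mid W=w]$. By (A1) this is strictly convex in $c$, so it has a unique minimiser $c_\eta(w)$. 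It is characterised by
\[
E\big[L'(cU_1/S_2)\,U_1/S_2\mid W=w\big]=0 .
\]

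\emph{Step 2 (the case $\eta=0$).} Use the Poisson mixture $\chi^2_1(\eta)=\sum_k \mathrm{Pois}(\eta/2)\{k\}\,\chi^2_{1+2k}$. For each mixture component, change variables to $Y=U_1+U_0$ and $W=U_1/Y$, so that $U_1/S_2=wY/S_2$. Conditionally on $W=w$, the component density of $Y$ is a gamma density with shape depending on $k$, multiplied by a factor $\propto w^{p_1/2-1}(1-w)^{k-1/2}$. The weighting by $U_1/S_2$ in the first-order condition combines with the density of $S_2$. Putting $X=Y/(Y+S_2)$ and absorbing the extra powers from this weight into the beta density, I expect the condition at $\eta=0$ to collapse to (\ref{3.3}) with $X\sim\mathrm{Beta}\big(\frac{p_1+3}{2},\frac{p_2-2}{2}\big)$. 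This would identify $d_0(w)$ as an explicit multiple of $\alpha_1$, namely $\alpha_1$ times an elementary function of $w$ produced by the change of variables. I will carry out this calculation first, since it fixes what $d_0$ is.

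\emph{Step 3 (main obstacle: $c_\eta(w)\le d_0(w)$ for all $\eta$).} Conditionally on $W=w$, the mixing weights over $k$ are proportional to
\[
\frac{(\eta/2)^k}{k!}\,\frac{\Gamma(\cdot+k)}{\Gamma(\cdot)}\,(1-w)^k .
\]
Larger $k$ moves the conditional law of $Y$, and hence of $U_1/S_2$, stochastically upward. I would show that the ratio of the $\eta$-conditional density of $Y$ to the $\eta=0$ density is nondecreasing in $y$; this is a monotone likelihood ratio statement in $y$. The function $y\mapsto L'(cwy/S_2)\,wy/S_2$ is increasing in $y$ on the region where $L'>0$ and has a single sign change. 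A standard sign-change (Karlin-type) lemma then gives
\[
E_\eta\big[L'(d_0(w)U_1/S_2)\,U_1/S_2\mid w\big]\ \ge\ E_0\big[L'(d_0(w)U_1/S_2)\,U_1/S_2\mid w\big]=0 .
\]
By convexity this yields $c_\eta(w)\le d_0(w)$. If the monotone likelihood ratio in $y$ fails in this raw form, I would instead condition further on $k$ and prove the inequality component by component, which is easier because each component is a pure gamma family.

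\emph{Step 4 (conclusion).} On the set $\{\phi_1(w)>d_0(w)\}$ we have $\phi_1(w)>d_0(w)\ge c_\eta(w)$. Strict convexity of $R_\eta(\cdot,w)$ then gives $R_\eta(d_0(w),w)<R_\eta(\phi_1(w),w)$. On the complementary set the two estimators coincide. Integrating over $W$ shows that the risk of $\delta_{\phi_{01}}$ is nowhere larger than that of $\delta_{\phi_1}$, with strict improvement whenever the hypothesis on $P(\phi_1(W)>d_0(W))$ holds. Condition (A2) justifies differentiating under the integral sign throughout.
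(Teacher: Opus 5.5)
Your strategy matches the paper's: truncate $\phi_1$ at the $K=0$ conditional minimiser $d_0(w)$, using the Poisson mixture for $T$, a monotone likelihood ratio, and a single-sign-change lemma (the paper cites Lemma A.2 of Bobotas). The structural difference is that the paper conditions on $(K,W)$ and proves $d_k(w)\le d_0(w)$ for every $k$, whereas you condition on $W$ alone and mix over the posterior of $K$. Your inequality follows from the paper's by linearity, so this is harmless, and the paper's route is cleaner because the posterior weights never enter.

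\textbf{Step 3 fails as written.} The likelihood ratio must be monotone in the variable inside $L'$, namely $V=U_1/S_2$ with $S_2$ integrated out, not in $Y$ with $S_2$ still random. Suppose you condition on $S_2=s$. Then $y\mapsto L'(cwy/s)\,wy/s$ changes sign at $y=s/(cw)$, a point that moves with $s$, and its $\eta=0$ conditional mean is not zero for each $s$. The sign-change lemma therefore only gives $E_\eta[\,\cdot\mid s]\ge h(s/(cw))\,E_0[\,\cdot\mid s]$, where $h$ is the likelihood ratio in $y$ and the factor depends on $s$. Integrating over $s$ then yields no nonnegative bound. Your fallback of conditioning on $k$ does not help, because it is still an MLR statement in $y$.

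The fix is the paper's. Given $K=k$ and $W=w$, $V$ has density $f_k(v)\propto v^{(p_1+2k-1)/2}(v+w)^{-(p_1+p_2+2k+1)/2}$. Hence $f_k/f_0\propto\{v/(v+w)\}^k$ is nondecreasing in $v$, and $v\mapsto L'(cv)v$ changes sign once, at $v=1/c$. This gives $E[L'(d_0(w)V)V\mid K=k,W=w]\ge 0$. Alternatively, you could transfer MLR from $Y$ to $V$ by noting that the kernel of $wy/S_2$ is TP$_2$ in $(v,y)$, but that step has to be stated.

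\textbf{Step 2 uses the wrong substitution.} With $X=Y/(Y+S_2)$ the parameters come out right, but $U_1/S_2=wX/(1-X)$. The $\eta=0$ condition therefore becomes $E[L'(cwX/(1-X))]=0$ with $X$ an ordinary beta variable, which is not (\ref{3.3}). Use $X=Y/S_2=V/w$ instead. Under the weight $V$ its density is $\propto x^{(p_1+1)/2}(1+x)^{-(p_1+p_2+1)/2}$, the beta distribution of the second kind with parameters $(\frac{p_1+3}{2},\frac{p_2-2}{2})$, and this gives $d_0(w)=\alpha_1/w$. That second-kind reading is what the paper's ``Beta'' means: for $L_1$ it yields $\alpha_1=(p_2-4)/(p_1+3)$, as in $\delta_{S11}$.

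This issue affects only the explicit form of $d_0$, not the domination argument. Step 4 is correct as written.
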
	
	\noindent\textbf{Proof.} By (\ref{3.2}) and the Poisson mixture representation of the noncentral chi-square distribution, conditional on an auxiliary random variable $K\sim \text{Poisson}\left(\frac{\eta}{2}\right),$	we have $T\mid K \sim \sigma_2^2\lambda\,\chi^2_{1+2K}.$	%Further, $S_2$ is independent of $(S_1,T)$, and given $K$, the ratio $ W=\frac{S_1}{S_1+T}$ is independent of \(S_1+T\). Since the risk function of $\delta_\phi$ depends only on $\eta$, we may assume, without loss of generality, that $\eta=1$. 
	Hence, the risk of the estimator $\delta_{\phi_1}$ is	
	\begin{align*}
		R(\delta_{\phi_1},\sigma_1,\sigma_2)=E^{K,W}E\left[L\left(\psi_1\left(\frac{S_1}{S_1+T}\right)\frac{S_1+T}{S_2\lambda}\right)\Big| K, W\right]=%E^{K}E\left[L\left(\frac{\psi\left(W\right)}{W}V\right)\Big| K\right]=
		E^{K,W}E\left[L\left(\phi_1(W)V\right)\Big| K, W\right].
	\end{align*} Since $S_2$ is independent of $S_1$ and $T$ and given $K$, $S_1/(S_1+T)$ is independent of $(S_1+T)$. Define
	\begin{align*}
		R_1(d,\sigma_1,\sigma_2)=E\left(L\left(d V\right)\Big|K=k, W=w\right)
	\end{align*}
	where the conditional density of $V$ given $W=w$ and $K=k$ is 
	\begin{align*}
		f_k(v)\propto v^{\frac{p_1+2k-1}{2}} (v+w)^{-\frac{p_1+p_2+2k+1}{2}}, ~~~ v>0.
	\end{align*}
	The risk function is minimized at $d_k(w)$, where $d_k(w)$ is the unique solution of 
	\begin{align*}
		E\left(L'\left(d_k(w)V\right)V\Big| K=k, W=w\right)=0.
	\end{align*}
	Using Lemma A.2. from \cite{bobotas2010estimation}, we have 
	\begin{align*}
		E\left(L'\left(d_0(w)V\right)V\Big| K=k, W=w\right)&\geq E\left(L'\left(d_0(w)V\right)V\Big| K=0,W=w\right)\\
		&=0= E\left(L'\left(d_k(w)V\right)V\Big| K=k, W=w\right).
	\end{align*}
	Consequently we get $d_k(w)\leq d_0(w)$, where $d_0(w)$ is the unique solution of 
	\begin{align*}
		E\left(L'\left(d_0(w)V\right)V\Big| K=0, W=w\right)=0.
	\end{align*}
	By taking the transformation $x=\frac{v}{w}$ on the integral equation, we get $E\left[L'\left(Xwd_0(w)\right)\right]=0$ with $X \sim \text{Beta} \left(\frac{p_1+3}{2}, \frac{p_1-2}{2}\right)$. Comparing with (\ref{3.3}), we get $\alpha_1=wd_0(w)$. Consider $\phi_{01}(w)=\min\left\{\phi_1(w),d_0(w)\right\}$, then for $P\big(d_0(w)<\phi_1(w)\big)\neq 0$ we get $d_k(w)\leq d_0(w)=\phi_{01}(w)<\phi_1(w)$ on set of positive probability. Hence we get $R_1(\phi_{01},\sigma_1,\sigma_2)<R_1(\phi_1,\sigma_1,\sigma_2)$ which implies $R_1(\delta_{\phi_{01}},\sigma_1,\sigma_2)<R_1(\delta_{\phi_1},\sigma_1,\sigma_2)$. This completes the proof of the result.
	
	\begin{corollary}
		The risk of the estimator %$\delta_{S1}=\min\left\{c_0\frac{S_1}{S_2},d_0(w)\frac{S_1+T}{S_2} \right\}=\min\left\{c_0,d_0(w)\frac{S_1+T}{S_1} \right\}\frac{S_1}{S_2}=\min\left\{c_0,d_0(W)\frac{1}{W} \right\}V$
		$\delta_{S1}=\min\left\{c_0,\frac{\alpha_1}{W} \right\}\frac{S_1}{S_2}$ dominates the BEE $\delta_{0}$ provided $P\left(\frac{\alpha_1}{c_0}<W\right)\neq 0$ holds true.
	\end{corollary}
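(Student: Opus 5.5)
The corollary follows by applying the preceding theorem to the particular $\mathcal{H}$-equivariant estimator obtained from the BEE. The BEE $\delta_0=c_0 S_1/S_2$ is $\mathcal{G}$-equivariant, hence also $\mathcal{H}$-equivariant, since $\mathcal{H}\subset\mathcal{G}$. In the notation of the theorem it is $\delta_{\phi_1}$ with the constant function $\phi_1(W)\equiv c_0$. Equivalently, $\psi_1(W)=c_0W$, because $\psi_1(W)(S_1+T)/S_2 = c_0 W (S_1+T)/S_2 = c_0 S_1/S_2$. So I will specialize the theorem to $\phi_1\equiv c_0$.

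The key step is to identify $d_0(w)$ explicitly. In the proof of the theorem, $d_0(w)$ is defined through the conditional equation at $K=0$. The change of variable $x=v/w$ turns this into $E[L'(X\,w\,d_0(w))]=0$ with $X$ Beta distributed, and comparing with (\ref{3.3}) gives $w\,d_0(w)=\alpha_1$, that is, $d_0(w)=\alpha_1/w$. This step is the main obstacle. I would double check two things:
\begin{itemize}
\item the Beta parameters coming from the conditional density $f_0(v)\propto v^{(p_1-1)/2}(v+w)^{-(p_1+p_2+1)/2}$ must match those in (\ref{3.3});
\item uniqueness of the root, which follows from strict convexity (A1): $L'$ is strictly increasing, so $E[L'(\alpha X)]$ is strictly increasing in $\alpha$.
\end{itemize}
Together these give $\alpha_1=w\,d_0(w)$ unambiguously.

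With $\phi_1\equiv c_0$, the improved function is $\phi_{01}(W)=\min\{c_0,\alpha_1/W\}$, so $\delta_{\phi_{01}}=\min\{c_0,\alpha_1/W\}\,S_1/S_2=\delta_{S1}$. The theorem's condition $P(\phi_1(W)>d_0(W))\neq0$ becomes $P(c_0>\alpha_1/W)\neq 0$, i.e. $P(\alpha_1/c_0<W)\neq0$, which is exactly the hypothesis of the corollary.

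It remains to confirm strict risk improvement, repeating the theorem's argument. For each fixed $k$ and $w$, the conditional risk $R_1(d)=E[L(dV)\mid K=k,W=w]$ is strictly convex in $d$ and minimized at $d_k(w)$. The bound $d_k(w)\le d_0(w)$ holds from the theorem (via Lemma A.2 of \cite{bobotas2010estimation}). Hence on the set $\{W>\alpha_1/c_0\}$ we have $d_k(w)\le \alpha_1/w<c_0$, and moving from $c_0$ to $\alpha_1/w$ strictly decreases $R_1$. Off this set the two estimators coincide. Taking expectation over $(K,W)$, the set has positive probability: $W$ has a continuous distribution on $(0,1)$ with positive density, so the condition reduces to $\alpha_1<c_0$. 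Therefore $R(\delta_{S1})<R(\delta_0)$ for all parameter values, which is the claimed dominance.
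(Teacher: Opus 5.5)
Your proposal is correct and follows the paper's route: the corollary is the theorem applied to the constant $\phi_1\equiv c_0$, with $d_0(w)=\alpha_1/w$ read off from the substitution $x=v/w$ in the theorem's proof, so that $\phi_{01}(W)=\min\{c_0,\alpha_1/W\}$ and the theorem's condition becomes $P(\alpha_1/c_0<W)\neq 0$. The parameter check you flag does go through, provided $X$ in (\ref{3.3}) is read as a beta variable of the second kind on $(0,\infty)$, with density proportional to $x^{(p_1+1)/2}(1+x)^{-(p_1+p_2+1)/2}$; this reading also reproduces $\alpha_1=(p_2-4)/(p_1+3)$ in the quadratic-loss example.
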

	We now obtained the corresponding results for special loss functions in the following examples.
	\begin{example}
		\begin{itemize}
			\item [(i)]	For the quadratic loss function $L_1(t)$, the improved estimator is obtained as
			\begin{align*}
				\delta_{S11}=\min\left\{\frac{p_2-4}{p_1+2},\ \frac{p_2-4}{p_1+3}\frac{1}{W}\right\}\frac{S_1}{S_2}=\frac{p_2-4}{p_1+2}\min\left\{ 1,\ \frac{p_2+2}{p_1+3}\frac{1}{W}\right\}\frac{S_1}{S_2}.
			\end{align*}
			\item [(ii)] For the entropy loss function $L_2(t)$, the improved estimator is obtained as
			\begin{align*}
				\delta_{S12}=\min\left\{\frac{p_2-1}{p_1},\ \frac{p_2-2}{p_1+1}\frac{1}{W}\right\}\frac{S_1}{S_2}=\frac{p_2-1}{p_1}\min\left\{ 1,\ \frac{p_1}{p_1+1}\frac{1}{W}\right\}\frac{S_1}{S_2}.
			\end{align*}
			\item [(iii)] Under the symmetric loss function $L_3(t)$, the improved estimator is obtained as
			\begin{align*}
				\delta_{S13}&=\min\left\{\left(\frac{p_2(p_2-2)}{p_1(p_1-2)}\right)^{1/2},\ \left(\frac{p_2(p_2-2)}{(p_1+1)(p_1-1)}\right)^{1/2}\frac{1}{W}\right\}\frac{S_1}{S_2}\\
				&= \left(\frac{p_2(p_2-2)}{p_1(p_1-2)}\right)^{1/2}\min\left\{ 1,\ \left(\frac{p_1(p_1-2)}{(p_1+1)(p_1-1)}\right)^{1/2}\frac{1}{W}\right\}\frac{S_1}{S_2}.
			\end{align*}
		\end{itemize}
	\end{example}

	We obtained a Stein-type estimator $\delta_{S1}$ whose risk is nowhere larger than that of $\delta_{01}$ under a general loss function $L(t)$. In fact such estimator of $\lambda$	is a member of a more general class of estimator of the form (see \cite{das1992improved})
	\begin{align}\label{st2}
		\delta_{\varphi_2}=\varphi_2(U)\frac{S_1}{S_2}\ \mbox{where} \ U=\frac{\sqrt{n_1n_2}|\bar{Y}|}{\sqrt{S_1}}=\frac{\sqrt{T}}{\sqrt{S_1}}.
	\end{align}
	In the next section we will derive some improved estimator on this class.
	\section{Smooth improved estimator}\label{se4}

	In this section, we obtain a smooth improved estimator using the IERD method of \cite{kubokawa1994double} and \cite{kubokawa1994unified}. The proposed approach is carried out as follows. The joint density of $V$ and $U$ given $K$ is 
	\begin{align}
		f_k(v,u) \propto u^{2k} v^{\frac{p_1+2k-1}{2}} \left(1+\left(1+u^2\right)v\right)^{-\frac{p_1+p_2+2k+1}{2}},~~ v>0, u>0, k=0,1,2,\dots
	\end{align}
	We define 
	\begin{align*}
		F_k(v,y)=\int_{0}^{y} f_k(v,s) ds~ ~\mbox{and}~~ F_0(v,y)=\int_{0}^{y} f_0(v,s) ds.
	\end{align*}
	\begin{theorem}\label{thierd}
		Suppose the function $\varphi_2$ satisfies the following conditions.
		\begin{enumerate}
			\item [(i)] $\varphi_2(u)$ is non-decreasing function in $u$ and $\lim\limits_{u \rightarrow \infty}\varphi_2(u)=c_0$
			\item [(ii)] $\int_{0}^{\infty} L'(\varphi_2(u)v)vF_0(v,u)dv \geq 0$.
		\end{enumerate}
		Then the risk of the estimator $\delta_{\varphi_2}$ in (\ref{st2}) is smaller than the $\delta_{0}$ under the loss function $L(t)$.
	\end{theorem}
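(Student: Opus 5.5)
This is Kubokawa's integral expression of risk difference (IERD) argument, carried out conditionally on the Poisson mixing variable $K$. Write $V=S_1/(\lambda S_2)$ and $U=\sqrt{T/S_1}$. Given $K=k$, the pair $(V,U)$ has density $f_k(v,u)$, and the risk of $\delta_{\varphi_2}$ is
\[
E^K\, E\bigl[L(\varphi_2(U)V)\mid K\bigr].
\]
By condition (i), $\varphi_2(u)\to c_0$ as $u\to\infty$. Hence for each fixed $v$ we can write
\[
L(c_0v)-L(\varphi_2(u)v)=\int_u^\infty \frac{d}{dt}L(\varphi_2(t)v)\,dt=\int_u^\infty L'(\varphi_2(t)v)\,\varphi_2'(t)\,v\,dt .
\]
Monotonicity gives $\varphi_2'\ge 0$ almost everywhere; if $\varphi_2$ is not differentiable, I would use the Lebesgue--Stieltjes measure $d\varphi_2$ instead. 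Integrating against $f_k(v,u)$ and applying Fubini (justified by (A2)) swaps the $u$- and $t$-integrals, so the $u$-integral becomes $\int_0^t f_k(v,u)\,du=F_k(v,t)$. This yields
\[
R(\delta_0)-R(\delta_{\varphi_2}) = E^K\left[\int_0^\infty \varphi_2'(t)\int_0^\infty L'(\varphi_2(t)v)\,v\,F_K(v,t)\,dv\,dt\right].
\]
Since $\varphi_2'\ge0$, it suffices to show that the inner $v$-integral is nonnegative for every $t$ and every $k$.

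Condition (ii) gives exactly this inner integral nonnegative for $k=0$. The main obstacle is passing from $F_0$ to $F_k$. I would use a monotone likelihood ratio argument. Consider the ratio
\[
\frac{F_k(v,t)}{F_0(v,t)}=\frac{\int_0^t s^{2k}\bigl(1+(1+s^2)v\bigr)^{-(p_1+p_2+2k+1)/2}\,ds}{\int_0^t \bigl(1+(1+s^2)v\bigr)^{-(p_1+p_2+1)/2}\,ds}\; v^{k}.
\]
I would change variables $s=t\,x$ and write the factor $v^k\bigl(1+(1+s^2)v\bigr)^{-k}=\bigl(v/(1+(1+s^2)v)\bigr)^k$. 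This factor is increasing in $v$ for each fixed $s$. The plan is then to show, via a covariance (Chebyshev-type) inequality, that the whole ratio is nondecreasing in $v$. This is the same mechanism as Lemma A.2 of \cite{bobotas2010estimation}, which was already used in the proof of the previous theorem.

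Given monotonicity of the ratio, the final step is a sign-change argument. Because $L$ is strictly convex with $L(1)=0$, the function $v\mapsto L'(\varphi_2(t)v)v$ is negative for $v<1/\varphi_2(t)$ and positive for $v>1/\varphi_2(t)$, so it has a single sign change from $-$ to $+$. Set $v_0=1/\varphi_2(t)$ and write $F_k=\rho(v)F_0$ with $\rho$ nondecreasing. On $\{v<v_0\}$ the integrand is negative and $\rho(v)\le\rho(v_0)$; on $\{v>v_0\}$ it is positive and $\rho(v)\ge\rho(v_0)$. Hence
\[
\int_0^\infty L'(\varphi_2(t)v)\,v\,F_k(v,t)\,dv \ \ge\ \rho(v_0)\int_0^\infty L'(\varphi_2(t)v)\,v\,F_0(v,t)\,dv \ \ge\ 0 .
\]
Taking expectation over $K$ shows the risk difference is nonnegative. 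Strict improvement follows whenever $\varphi_2$ is nonconstant, since then $\varphi_2'>0$ on a set of positive measure.
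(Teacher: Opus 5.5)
Your route is the paper's: the IERD identity, Fubini to obtain $F_k(v,t)$, and then Kubokawa's sign-change lemma at $v_0=1/\varphi_2(t)$. You correctly see that the lemma needs $F_k(v,t)/F_0(v,t)$ to be nondecreasing in $v$; the paper only asserts monotonicity in the upper limit $x$ and then uses it in $v$. However, this is exactly the step you leave as a plan, and the plan as described does not work. Write
\[
\frac{F_k(v,t)}{F_0(v,t)}\propto\frac{\int_0^t h(s,v)\,g_v(s)\,ds}{\int_0^t g_v(s)\,ds},\qquad
h(s,v)=\Bigl(\frac{s^2v}{1+(1+s^2)v}\Bigr)^{k},\qquad
g_v(s)=\bigl(1+(1+s^2)v\bigr)^{-(p_1+p_2+1)/2}.
\]
That $h$ increases in $v$ for fixed $s$ is not enough, because the normalized weight also depends on $v$. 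The $v$-derivative of the ratio is $E_v[\partial_v h]+\mathrm{Cov}_v(h,\partial_v\log g_v)$. Here $h$ is increasing in $s$, while $\partial_v\log g_v(s)=-\tfrac{p_1+p_2+1}{2}(1+s^2)/\{1+(1+s^2)v\}$ is decreasing in $s$. So Chebyshev's covariance inequality makes the second term $\le 0$, which works against you. The substitution $s=tx$ does not remove this $v$-dependence.

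The missing idea is to rescale $s$ with $v$. Put $s=y\sqrt{(1+v)/v}$, so that $1+(1+s^2)v=(1+v)(1+y^2)$. Equivalently, $U\sqrt{V/(1+V)}=\sqrt{T/(S_1+\lambda S_2)}$ is independent of $V$ given $K=k$, with density $q_k(y)\propto y^{2k}(1+y^2)^{-(p_1+p_2+2k+1)/2}$. Then, up to a constant depending only on $k$,
\[
F_k(v,t)=f_V(v)\,G_k\bigl(t\sqrt{v/(1+v)}\bigr),
\]
where $f_V$ is the ($k$-free) marginal density of $V$ and $G_k$ is the distribution function of $q_k$. Since $q_k/q_0\propto\{y^2/(1+y^2)\}^k$ is increasing, $G_k/G_0$ is nondecreasing. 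Because $t\sqrt{v/(1+v)}$ is increasing in $v$, $F_k/F_0$ is nondecreasing in $v$; it is also nondecreasing in $t$, which is why the paper's claim happens to be equivalent here. With this in place your sign-change step goes through verbatim.

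One further correction: your strictness claim is too strong. Take the boundary choice with equality in (ii), i.e.\ the Brewster--Zidek-type $\varphi_2^{*}$. At $\mu=0$ one has $K\equiv 0$, and the risk difference is exactly $\int\varphi_2^{*\prime}(t)\cdot 0\,dt=0$, although $\varphi_2^{*}$ is nonconstant. Strict improvement holds when $\eta>0$, because the $k\ge1$ terms are then strictly positive wherever $\varphi_2'>0$ ($G_k/G_0$ is strictly increasing). It also holds when (ii) is strict on a set where $\varphi_2'>0$.
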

	\noindent\textbf{Proof.} Let RD be the risk difference of $\delta_{0}$ and $\delta_{\varphi_2}$, then we can express RD as
	$$\begin{aligned}
		RD & =E\left\{\int_1^{\infty} \frac{d}{d t}L\left(\varphi_{2}(tu)v\right) d t\right\}\\
		& =E\left\{\int_1^{\infty} L'\left(\varphi_{2}(tu)v\right) \varphi_2^{\prime}\left(tu\right) v ud t\right\}\\
		&=\int_{0}^{\infty}\int_{0}^{\infty}\int_1^{\infty} L'\left(\varphi_{2}(tu)v\right) \varphi_2^{\prime}\left(tu\right) vu f_{k}(v,u)dt du dv.\\
	\end{aligned}$$
	Making the transformation $z=tu$ and put $x=\frac{z}{t}$ we get
	%$$\begin{aligned}
		%	RD&\geq \int_{0}^{\infty}\int_{0}^{\infty}\int_1^{\infty} L'\left(\phi_{1}(h)v_1^{\frac{k}{2}}\right) \phi_1^{\prime}\left(h\right) v_1^{\frac{k}{2}} \frac{h}{t}f_{\eta}(\frac{h}{t},v_1)dt \frac{dh}{t} dv_1
		%\end{aligned}$$
		%Again put $q=\frac{h}{t}$,
		$$\begin{aligned}
			RD
			%	&\geq \int_{0}^{\infty}\int_{0}^{\infty}\int_0^{h} L'\left(\phi_{1}(h)v_1^{\frac{k}{2}}\right) \phi_1^{\prime}\left(h\right) v_1^{\frac{k}{2}} f_{\eta}(q,v_1)dq dh dv_1\\
			&=\int_{0}^{\infty}\varphi_2^{\prime}\left(z\right)\int_{0}^{\infty}\int_0^{z} L'\left(\varphi_{2}(z)v\right)  v f_{k}(v,x)dx dz dv.
		\end{aligned}$$
		It can be easily check that $\frac{f_{k}(v,x)}{f_{0}(v,x)}$ is non decreasing in $x$ $\forall~ x>0$, and hence we we have $\frac{F_{k}(v,x)}{F_{0}(v,x)}$ is non decreasing in $x$ for $x>0$. By using Lemma $2.1$ of \cite{kubokawa1994unified} and let $v^{*}=\varphi_2^{-1}$ we get 
		$$\begin{aligned}
			RD & \geq \int_{0}^{\infty}\varphi_2^{\prime}\left(z\right)\frac{F_{k}(v^{*},z)}{F_{0}(v^{*},z)}\int_{0}^{\infty} L'\left(\varphi_{2}(z)v\right) v F_{0}(v,z) dv dz.
		\end{aligned}$$
		Hence, the risk difference is non-negative if 
		$$\begin{aligned}
			\int_{0}^{\infty} L'\left(\varphi_{2}(z)v\right) v F_{0}(v,z) dv&\geq0.
		\end{aligned}$$
		This completes the proof.

		Now, we obtain improved estimators for $\lambda$ under three special loss functions by applying Theorem \ref{thierd} in the subsequent corollaries.
		\begin{corollary}
			Let us assume that the function $\varphi_2(u)$ satisfies the following conditions 
			\begin{enumerate}
				\item[(i)] $\varphi_2(u)$ is non-decreasing function in $u$ and $\lim\limits_{u\rightarrow\infty}\varphi_2(u)=\frac{p_2-4}{p_1+2}$
				\item[(ii)] $\varphi_2(u)\geq\varphi_{2}^{*1}(u)$
			\end{enumerate}
			where
			\begin{align*}
				\varphi_{2}^{*1}(u) = \frac{ \left(p_2-4\right) \int_{0}^{u} (1+s^2)^{-\frac{p_1+3}{2}}ds}{\left(p_1+3\right)\int_{0}^{u} (1+s^2)^{-\frac{p_1+5}{2}}ds}
			\end{align*} 
			The the risk of the estimator $\delta_{\varphi_2}$ given in (\ref{st2}) is nowhere greater than that of $\delta_{01}$ under the quadratic loss function $L_1(t)$.
		\end{corollary}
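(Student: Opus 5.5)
The plan is to obtain the corollary directly from Theorem \ref{thierd} by specializing to $L_1(t)=(t-1)^2$. Then $L_1'(t)=2(t-1)$, and by Example (i) the BEE constant is $c_0=\frac{p_2-4}{p_1+2}$, assuming $p_2>4$. Hence condition (i) of Theorem \ref{thierd} is exactly condition (i) of the corollary. The remaining work is to show that condition (ii) of Theorem \ref{thierd} reduces to $\varphi_2(u)\geq\varphi_2^{*1}(u)$.

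With $L_1'(t)=2(t-1)$, condition (ii) reads
\begin{align*}
\int_0^\infty\left(\varphi_2(u)v-1\right)vF_0(v,u)\,dv\geq 0 .
\end{align*}
This is equivalent to
\begin{align*}
\varphi_2(u)\geq \frac{\int_0^\infty vF_0(v,u)\,dv}{\int_0^\infty v^2F_0(v,u)\,dv},
\end{align*}
since $F_0\geq 0$ makes the denominator positive. So it suffices to compute the moments $M_m(u)=\int_0^\infty v^mF_0(v,u)\,dv$ for $m=1,2$. The normalizing constant of $f_0$ cancels in the ratio, so it can be ignored.

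For these moments, substitute $F_0(v,u)=\int_0^u f_0(v,s)\,ds$ with
\begin{align*}
f_0(v,s)\propto v^{\frac{p_1-1}{2}}\left(1+(1+s^2)v\right)^{-N},\qquad N=\tfrac{p_1+p_2+1}{2},
\end{align*}
and interchange the order of integration (Tonelli; the integrand is nonnegative). For fixed $s$, the change of variables $w=(1+s^2)v$ gives
\begin{align*}
\int_0^\infty v^{m+\frac{p_1-1}{2}}\left(1+(1+s^2)v\right)^{-N}dv=(1+s^2)^{-\left(m+\frac{p_1+1}{2}\right)}B\!\left(m+\tfrac{p_1+1}{2},\,N-m-\tfrac{p_1+1}{2}\right).
\end{align*}
This yields the following two moments:
\begin{align*}
M_1(u)&\propto B\!\left(\tfrac{p_1+3}{2},\tfrac{p_2-2}{2}\right)\int_0^u(1+s^2)^{-\frac{p_1+3}{2}}ds,\\
M_2(u)&\propto B\!\left(\tfrac{p_1+5}{2},\tfrac{p_2-4}{2}\right)\int_0^u(1+s^2)^{-\frac{p_1+5}{2}}ds.
\end{align*}
Finiteness of $M_2$ requires $p_2>4$, which is the same condition under which $\delta_{01}$ is defined.

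Using $\Gamma(x+1)=x\Gamma(x)$, the ratio of the beta functions is $\frac{(p_2-4)/2}{(p_1+3)/2}=\frac{p_2-4}{p_1+3}$. Therefore $M_1(u)/M_2(u)=\varphi_2^{*1}(u)$, and condition (ii) of the corollary is exactly condition (ii) of Theorem \ref{thierd}. Theorem \ref{thierd} then gives the risk domination of $\delta_{\varphi_2}$ over $\delta_{01}$.

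I expect the only delicate step to be the interchange of integrals and the beta-function bookkeeping. One must check that the exponents $N-m-\frac{p_1+1}{2}$ equal $\frac{p_2-2}{2}$ and $\frac{p_2-4}{2}$ and are positive. As a sanity check, $\varphi_2^{*1}(u)\to c_0$ as $u\to\infty$ should be consistent with condition (i), and by the analogous computation for $u\to\infty$ it is.
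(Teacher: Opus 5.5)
Your proposal is correct and takes the same route as the paper, which obtains this corollary simply by specializing Theorem \ref{thierd} to $L_1(t)=(t-1)^2$. There $c_0=\frac{p_2-4}{p_1+2}$, and condition (ii) becomes $\varphi_2(u)\ge M_1(u)/M_2(u)$. The paper leaves the moment computation implicit; your Tonelli and Beta-function calculation, giving the factor $\frac{p_2-4}{p_1+3}$, fills in exactly that omitted step.
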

		
		\begin{corollary}
			Under the scale-invariant entropy loss function $L_2(t)$, the risk of the estimator $\delta_{\varphi_2}$ given in (\ref{st2}) is nowhere greater than that of $\delta_{02}$ provided the function $\varphi_2(u)$ satisfies  
			\begin{enumerate}
				\item [(i)] $\varphi_2(u)$ is non-decreasing function in $u$ and $\lim\limits_{u\rightarrow\infty}\varphi_2(u)=\frac{p_2-2}{p_1}$
				\item[(ii)] $\varphi_2(u)\geq\varphi_{2}^{*2}(u)$
			\end{enumerate}
			where
			\begin{align*}
				\varphi_{2}^{*2}(u) = \frac{ \left(p_2-2\right) \int_{0}^{u} (1+s^2)^{-\frac{p_1+1}{2}}ds}{\left(p_1+1\right) \int_{0}^{u} (1+s^2)^{-\frac{p_1+3}{2}}ds}.
			\end{align*} 
			
		\end{corollary}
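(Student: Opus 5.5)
This is a direct application of Theorem \ref{thierd} with $L=L_2$, so I only need to verify its two hypotheses. First, under $L_2(t)=t-\ln t-1$ the BEE constant is $c_0=(p_2-2)/p_1$ (Example (ii)). Hence condition (i) of the corollary is exactly condition (i) of Theorem \ref{thierd}. It then remains to show that condition (ii) of the corollary implies
\[
\int_0^\infty L_2'(\varphi_2(u)v)\,v\,F_0(v,u)\,dv\ \ge 0 .
\]

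Since $L_2'(t)=1-1/t$, we have $L_2'(\varphi_2(u)v)v = v-1/\varphi_2(u)$. So the integral condition becomes
\[
\varphi_2(u)\ \ge\ \frac{\int_0^\infty F_0(v,u)\,dv}{\int_0^\infty v\,F_0(v,u)\,dv}.
\]
I would evaluate both integrals by Fubini. Write $F_0(v,u)=\int_0^u f_0(v,s)\,ds$ with
\[
f_0(v,s)\propto v^{\frac{p_1-1}{2}}\bigl(1+(1+s^2)v\bigr)^{-\frac{p_1+p_2+1}{2}} .
\]
For fixed $s$, substitute $w=(1+s^2)v$. The $v$-integral of $v^m f_0(v,s)$ then factors as
\[
(1+s^2)^{-\frac{p_1+1}{2}-m}\,B\Bigl(\tfrac{p_1+1}{2}+m,\ \tfrac{p_2}{2}-m\Bigr)
\]
times the common normalizing constant.

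With $m=0$ and $m=1$, the ratio of the Beta functions is
\[
\frac{B\bigl(\frac{p_1+1}{2},\frac{p_2}{2}\bigr)}{B\bigl(\frac{p_1+3}{2},\frac{p_2-2}{2}\bigr)}=\frac{p_2-2}{p_1+1}.
\]
This uses $B(a,b)/B(a+1,b-1)=(b-1)/a$, and requires $p_2>2$ for finiteness. Integrating over $s\in(0,u)$, the right-hand side of the displayed inequality becomes
\[
\frac{(p_2-2)\int_0^u(1+s^2)^{-\frac{p_1+1}{2}}ds}{(p_1+1)\int_0^u(1+s^2)^{-\frac{p_1+3}{2}}ds},
\]
which is exactly $\varphi_2^{*2}(u)$. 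Therefore (ii) of the corollary gives (ii) of Theorem \ref{thierd}, and the conclusion follows.

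The only delicate point is bookkeeping. I must make sure the $s$-integrals pick up the correct powers of $(1+s^2)$ from the substitution, and I must keep the normalizing constant common to numerator and denominator, including the $u^{2k}$ factor at $k=0$, which is trivially $1$. As a consistency check, letting $u\to\infty$ gives
\[
\int_0^\infty(1+s^2)^{-a}ds=\tfrac{\sqrt\pi}{2}\,\Gamma\bigl(a-\tfrac12\bigr)/\Gamma(a).
\]
Then $\varphi_2^{*2}(\infty)=\frac{p_2-2}{p_1+1}\cdot\frac{p_1+1}{p_1}=\frac{p_2-2}{p_1}=c_0$, consistent with (i). This also shows the set of admissible $\varphi_2$ is nonempty, e.g. $\varphi_2=\varphi_2^{*2}$ itself, provided it is nondecreasing.
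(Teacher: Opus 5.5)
Your proposal is correct and takes the same route as the paper, which obtains this corollary simply by applying Theorem \ref{thierd} with $L_2$ and $c_0=(p_2-2)/p_1$. Your Fubini/Beta-function evaluation of $\int_0^\infty F_0(v,u)\,dv$ and $\int_0^\infty vF_0(v,u)\,dv$ is the computation the paper leaves implicit, and it correctly reproduces $\varphi_2^{*2}$: the constant ratio is $(p_2-2)/(p_1+1)$, the $s$-integrands are $(1+s^2)^{-(p_1+1)/2}$ and $(1+s^2)^{-(p_1+3)/2}$, and finiteness needs $p_2>2$.
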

		\begin{corollary}
			Suppose the following conditions hold true.  
			\begin{enumerate}
				\item [(i)] $\varphi_2(u)$ is non-decreasing function in $u$ and $\lim\limits_{u\rightarrow\infty}\varphi_2(u)=\sqrt{\frac{p_2(p_2-2)}{p_1(p_1-2)}}$
				\item[(ii)] $\varphi_2(u)\geq\varphi_{2}^{*3}(u)$
			\end{enumerate}
			where
			\begin{align*}
				\varphi_{2}^{*3}(u) 
				= \sqrt{\frac{p_2\left(p_2-2\right) \int_{0}^{u} (1+s^2)^{-\frac{p_1+1}{2}}ds}{\left(p_1+1\right)\left(p_1-1\right) \int_{0}^{u} (1+s^2)^{-\frac{p_1+3}{2}}ds}}.
			\end{align*}
			Then the risk of the estimator $\delta_{\varphi_2}$ given in (\ref{st2}) is nowhere greater than that of $\delta_{03}$ under the scale-invariant symmetric loss function $L_3(t)$. 
		\end{corollary}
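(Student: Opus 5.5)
We apply Theorem \ref{thierd} with the symmetric loss $L_3(t)=t+1/t-2$. Condition (i) of the corollary is exactly condition (i) of Theorem \ref{thierd}, because Example (iii) gives $c_0=\sqrt{p_2(p_2-2)/(p_1(p_1-2))}$. So everything reduces to showing that condition (ii) of Theorem \ref{thierd}, namely $\int_0^\infty L_3'(\varphi_2(u)v)\,vF_0(v,u)\,dv\ge 0$, follows from $\varphi_2(u)\ge\varphi_2^{*3}(u)$.

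Since $L_3'(t)=1-t^{-2}$, the condition reads
\begin{align*}
\int_0^\infty \Big(v-\frac{1}{\varphi_2(u)^2 v}\Big)F_0(v,u)\,dv\ge 0,
\end{align*}
which is equivalent to
\begin{align*}
\varphi_2(u)^2\ \ge\ \frac{\int_0^\infty v^{-1}F_0(v,u)\,dv}{\int_0^\infty vF_0(v,u)\,dv}.
\end{align*}
It therefore remains to evaluate this ratio. Write $F_0(v,u)=\int_0^u f_0(v,s)\,ds$ with $f_0(v,s)\propto v^{(p_1-1)/2}(1+(1+s^2)v)^{-(p_1+p_2+1)/2}$, and interchange the order of integration (Fubini, justified by (A2) and positivity). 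For fixed $s$, substitute $x=(1+s^2)v$. The inner $v$-integrals become Beta integrals of the form $\int_0^\infty x^{a-1}(1+x)^{-(a+b)}dx=B(a,b)$, which gives
\begin{align*}
\int_0^\infty v^{(p_1+1)/2}(1+(1+s^2)v)^{-\frac{p_1+p_2+1}{2}}dv &= (1+s^2)^{-\frac{p_1+3}{2}}B\Big(\tfrac{p_1+3}{2},\tfrac{p_2-2}{2}\Big),\\
\int_0^\infty v^{(p_1-3)/2}(1+(1+s^2)v)^{-\frac{p_1+p_2+1}{2}}dv &= (1+s^2)^{-\frac{p_1-1}{2}}B\Big(\tfrac{p_1-1}{2},\tfrac{p_2+2}{2}\Big).
\end{align*}
These require $p_2>2$ and $p_1>1$, which hold under the standing assumption $p_1,p_2>2$.

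Using $B(a,b)=\Gamma(a)\Gamma(b)/\Gamma(a+b)$ together with $\Gamma(x+1)=x\Gamma(x)$, the ratio of the Beta constants simplifies to
\begin{align*}
\frac{B\big(\frac{p_1-1}{2},\frac{p_2+2}{2}\big)}{B\big(\frac{p_1+3}{2},\frac{p_2-2}{2}\big)}=\frac{p_2(p_2-2)}{(p_1+1)(p_1-1)}.
\end{align*}
Hence
\begin{align*}
\frac{\int_0^\infty v^{-1}F_0(v,u)\,dv}{\int_0^\infty vF_0(v,u)\,dv}=\frac{p_2(p_2-2)\int_0^u (1+s^2)^{-\frac{p_1-1}{2}}ds}{(p_1+1)(p_1-1)\int_0^u(1+s^2)^{-\frac{p_1+3}{2}}ds}.
\end{align*}
Taking square roots, condition (ii) of Theorem \ref{thierd} becomes $\varphi_2\ge$ this bound, and Theorem \ref{thierd} then yields the dominance.

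The main obstacle is bookkeeping the exponents of $(1+s^2)$. My computation gives $-(p_1-1)/2$ in the numerator, whereas the statement's $\varphi_2^{*3}$ has $-(p_1+1)/2$. I would recheck the power of $v$ in $f_0$ first. If my exponent stands, then, since $(1+s^2)^{-(p_1-1)/2}\ge(1+s^2)^{-(p_1+1)/2}$, the displayed $\varphi_2^{*3}$ is a weaker threshold than the one derived. In that case the argument would need to be stated with the derived threshold, or the discrepancy reconciled, before the corollary follows as written.

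As a consistency check, let $u\to\infty$ in the derived ratio. Using $\int_0^\infty(1+s^2)^{-m}ds=\frac{\sqrt\pi}{2}\frac{\Gamma(m-\frac12)}{\Gamma(m)}$, the ratio of the $s$-integrals tends to $(p_1+1)/(p_1-2)$. The ratio itself therefore tends to $\frac{p_2(p_2-2)}{(p_1+1)(p_1-1)}\cdot\frac{p_1+1}{p_1-2}=\frac{p_2(p_2-2)}{(p_1-1)(p_1-2)}$. This exceeds $c_0^2=\frac{p_2(p_2-2)}{p_1(p_1-2)}$, so a non-decreasing $\varphi_2$ with limit $c_0$ cannot satisfy the derived threshold for all large $u$. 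This check therefore casts doubt on the derived exponent, and resolving it is the first thing I would do.
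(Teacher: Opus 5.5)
Your route is the paper's own. The paper gives no separate argument here; it simply feeds $L_3'(t)=1-t^{-2}$ into condition (ii) of Theorem \ref{thierd}. Your reduction to $\varphi_2(u)^2\ge\int_0^\infty v^{-1}F_0(v,u)\,dv\big/\int_0^\infty vF_0(v,u)\,dv$ is correct, as are both Beta integrals and the constant $p_2(p_2-2)/((p_1+1)(p_1-1))$. So the threshold that Theorem \ref{thierd} actually delivers is
\[
\varphi^{\dagger}(u)=\sqrt{\frac{p_2(p_2-2)\int_0^u(1+s^2)^{-\frac{p_1-1}{2}}ds}{(p_1+1)(p_1-1)\int_0^u(1+s^2)^{-\frac{p_1+3}{2}}ds}}.
\]
The printed $\varphi_2^{*3}$ reuses the two $s$-integrals of $\varphi_2^{*2}$, which come from $\int F_0$ and $\int vF_0$.

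What is wrong in your proposal is the consistency check, which made you doubt a correct exponent. With $I(m)=\int_0^\infty(1+s^2)^{-m}ds=\frac{\sqrt\pi}{2}\Gamma(m-\frac12)/\Gamma(m)$,
\[
\frac{I\bigl(\frac{p_1-1}{2}\bigr)}{I\bigl(\frac{p_1+3}{2}\bigr)}=\frac{\Gamma\bigl(\frac{p_1-2}{2}\bigr)\Gamma\bigl(\frac{p_1+3}{2}\bigr)}{\Gamma\bigl(\frac{p_1-1}{2}\bigr)\Gamma\bigl(\frac{p_1+2}{2}\bigr)}=\frac{(p_1+1)(p_1-1)}{p_1(p_1-2)},
\]
not $(p_1+1)/(p_1-2)$; you dropped the factor $(p_1-1)/p_1$. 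Hence $\varphi^\dagger(u)^2\to p_2(p_2-2)/(p_1(p_1-2))=c_0^2$, as a Brewster--Zidek boundary must. Moreover, $\varphi^\dagger$ is increasing, because the integrand ratio $(1+s^2)^2$ is increasing. Also $\varphi^\dagger(0+)$ equals the Stein constant in $\delta_{S13}$. It is the printed bound that fails the check: since $I(\frac{p_1+1}{2})/I(\frac{p_1+3}{2})=(p_1+1)/p_1$, one gets $\varphi_2^{*3}(u)\to\sqrt{p_2(p_2-2)/(p_1(p_1-1))}<c_0$.

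So there is nothing to reconcile toward the printed formula. The corollary as stated does not follow from Theorem \ref{thierd}, and it is in fact false.

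\begin{itemize}
\item For $u>0$, $\varphi_2^{*3}(u)<\varphi^\dagger(u)$, because $(1+s^2)^{-(p_1-1)/2}>(1+s^2)^{-(p_1+1)/2}$.
\item At $\mu=0$ we have $K\equiv0$, and the risk difference is exactly $R(\delta_{03})-R(\delta_{\varphi_2})=\int_0^\infty\varphi_2'(z)G(z)\,dz$, where $G(z)=\int_0^\infty L_3'(\varphi_2(z)v)\,v\,F_0(v,z)\,dv$. This $G(z)$ is strictly negative wherever $\varphi_2(z)<\varphi^\dagger(z)$.
\item Take $\varphi_2=\varphi_2^{*3}$ on $(0,u_1]$ and $\varphi_2=\varphi^\dagger$ on $(u_1,\infty)$. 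This function is non-decreasing with limit $c_0$ and satisfies both printed hypotheses.
\item Yet the part on $(0,u_1)$ contributes a negative amount. The upward jump at $u_1$ contributes $\int_{\varphi_2^{*3}(u_1)}^{\varphi^\dagger(u_1)}\int_0^\infty L_3'(cv)\,v\,F_0(v,u_1)\,dv\,dc<0$. The part beyond $u_1$ contributes zero.
\end{itemize}

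Hence this $\delta_{\varphi_2}$ is strictly worse than $\delta_{03}$ at $\mu=0$. Your argument is a complete proof of the corrected corollary, with hypothesis (ii) replaced by $\varphi_2\ge\varphi^\dagger$; that is what you should write up.
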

		\begin{remark}
			In the above corollaries, we have obtained a call of improved estimators for $L_1(t),~L_2(t)$ and $L_3(t)$. The boundary estimators of this class are obtained as $\delta_{\varphi^{*1}_{2}}$, $\delta_{\varphi^{*2}_{2}}$ and $\delta_{\varphi^{*3}_{2}}$. These estimators are \cite{brewster1974improving} type estimators. 
		\end{remark}
		\section{A simulation study for improved estimator}\label{sim1}
		In this section, we conduct a Monte Carlo simulation study to compare the risk performance of the proposed estimators for small and moderate sample sizes. We were generated $60,000$ random samples from two normal populations $N(0,\sigma_1^2)$ and $N(0,\sigma_2^2)$ for various values of $\sigma_1$ and $\sigma_2$. As we can see from the above, each estimators depend on the scale parameter $\sigma_1$ and $\sigma_2$ only through $\sigma_r$. Hence, we conducted the analysis as a function of $\sigma_r$. To compare the performance of the estimators, we use the relative risk improvement (RRI) with respect to BEE. The RRI of an estimator $\delta$ relative to $\delta_0$ is defined by
		\begin{align*}
			\mbox{RRI}=\frac{\mbox{Risk}(\delta_0) - \mbox{Risk}(\delta)}{\mbox{Risk}(\delta_0)} \times 100.
		\end{align*}
		We have plotted the RRIs of various estimator of $\lambda$ with respect to different loss functions $L_1(t)$, $L_2(t)$ and $L_3(t)$ in Figures \ref{fig1}--\ref{fig3} respectively. Now we discussed the following key observation based on the figure \ref{fig1} for the loss function $L_1(t)$. 
		%   \begin{itemize}
			%   	\item [(i)] The improvement region of $\delta_{S1}$ is greater than $\delta_{\varphi_2^{*1}}$ when $\mu$ is near to origin. The improvement region of $\delta_{S1}$ is lesser than the $\delta_{\varphi_2^{*1}}$ when $\mu\geq 0.5$ (approximately).
			%   	
			%   	\item [(ii)] The RRI values of the estimator $\delta_{S1}$ is subsequently decreases when the parameter $\mu$ goes away from zero.
			%   	
			%   	\item [(iii)] The improvement region of $\delta_{\varphi_2^{*1}}$ is greater than $\delta_{S1}$ when $\mu$ is away from origin. The estimator $\delta_{\varphi_2^{*1}}$ have highest RRI value when $\mu$ takes moderate value such as $\mu=0.5$ (approximately).
			%   	
			%   	\item [(iv)] The RRI of $\delta_{S1}$ is increasing in $\sigma_r$ when the value of $\mu$ is away from zero i.e. $\mu\geq 0.4$ (approximately) where as when the value of $\mu$ takes near to zero the RRI of $\delta_{S1}$ is no longer a monotone function of $\sigma_r$.
			%   	
			%   	\item [(v)] The RRI of $\delta_{\varphi_2^{*1}}$ is decreasing in $\sigma_r$ when $\mu$ takes value near to zero i.e. $\mu\leq 0.4$ (approximately) where as when the value of $\mu \geq 0.4$ (approximately) the RRI of $\delta_{\varphi_2^{*1}}$ is no longer a monotone function of $\sigma_r$. 
			%   \end{itemize}
		%   
		%   It can be seen that the similar observation can be made for the loss function $L_2(t)$ and $L_3(t)$ see in Figures \ref{fig2} and \ref{fig3}.
		
		\begin{itemize}
			\item[(i)] The improvement region of $\delta_{S1}$ is larger than that of the estimator $\delta_{\varphi_2^{*1}}$ when $\mu$ is close to zero. However, when $\mu \geq 0.5$ (approximately), the improvement region of $\delta_{S1}$ is smaller than that of $\delta_{\varphi_2^{*1}}$.
			
			\item[(ii)] The RRI of the estimator $\delta_{S1}$ decreases as the parameter $\mu$ moves away from zero.
			
			\item[(iii)] The improvement region of $\delta_{\varphi_2^{*1}}$ is larger than that of $\delta_{S1}$ when $\mu$ is away from zero. Moreover, the estimator $\delta_{\varphi_2^{*1}}$ attains its highest RRI when $\mu$ takes a moderate value, approximately $\mu=0.5$.
			
			\item[(iv)] The RRI of $\delta_{S1}$ is increasing in $\sigma_r$ when $\mu$ is away from zero, i.e., $\mu \geq 0.4$ (approximately). In contrast, when $\mu$ is close to zero, the RRI of $\delta_{S1}$ is no longer monotone in $\sigma_r$.
			
			\item[(v)] The RRI of $\delta_{\varphi_2^{*1}}$ is decreasing in $\sigma_r$ when $\mu$ is close to zero, i.e., $\mu \leq 0.4$ (approximately). In contrast, when $\mu \geq 0.4$ (approximately), the RRI of $\delta_{\varphi_2^{*1}}$ is no longer monotone in $\sigma_r$.
		\end{itemize}
		Similar observations can be made for the loss functions $L_2(t)$ and $L_3(t)$; see Figures~\ref{fig2} and~\ref{fig3}, respectively.
	
		\begin{figure}[htbp]
			\centering
			\begin{subfigure}[b]{0.32\textwidth}
				\centering
				\includegraphics[width=\linewidth]{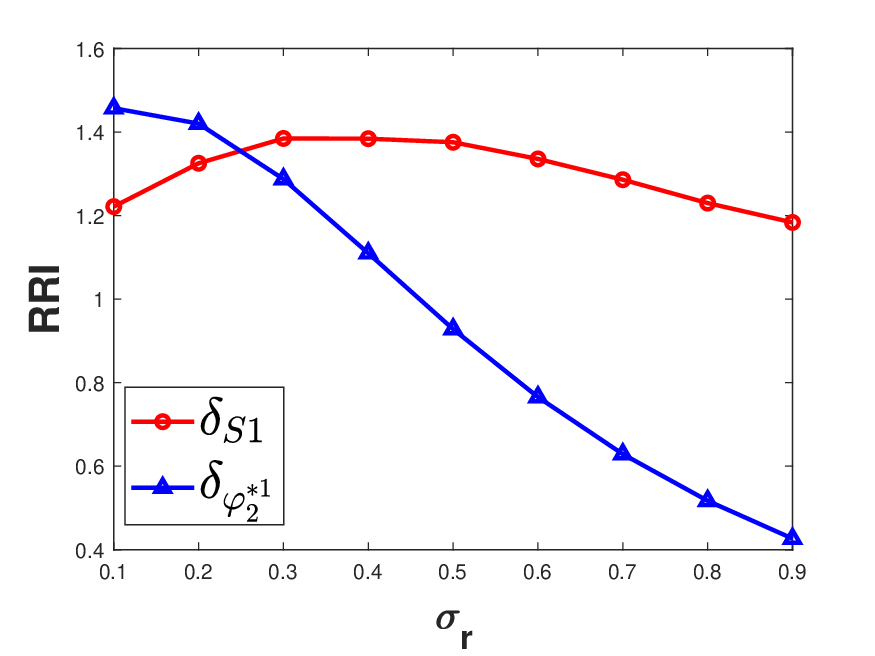}
				\caption{$(n_1,n_2)=(4,5)$, $\mu=0.3$}
			\end{subfigure}
			\hfill
			\begin{subfigure}{0.32\textwidth}
				\centering
				\includegraphics[width=\linewidth]{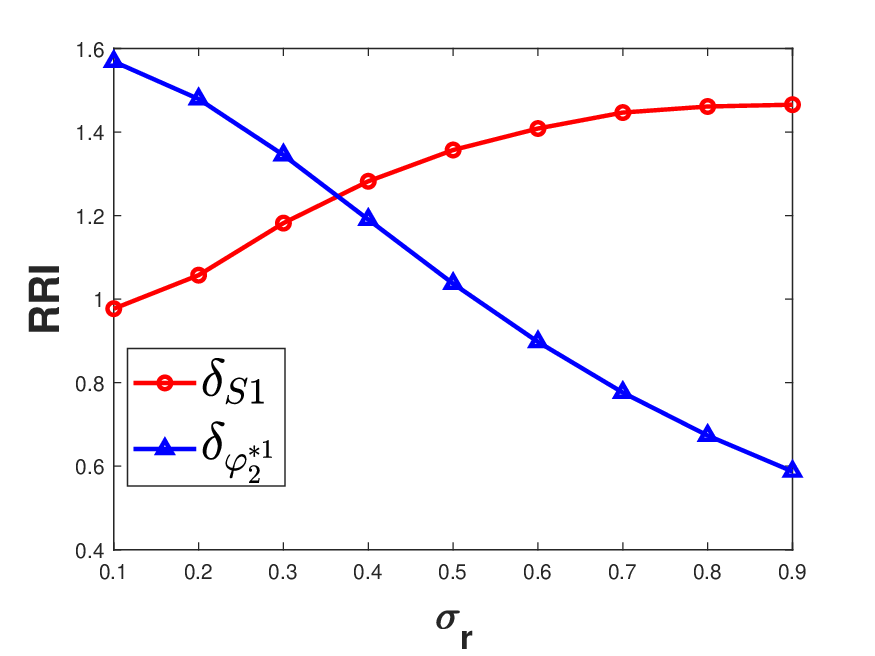}
				\caption{$(n_1,n_2)=(5,5)$, $\mu=0.3$}
			\end{subfigure}
			\hfill
			\begin{subfigure}{0.32\textwidth}
				\centering
				\includegraphics[width=\linewidth]{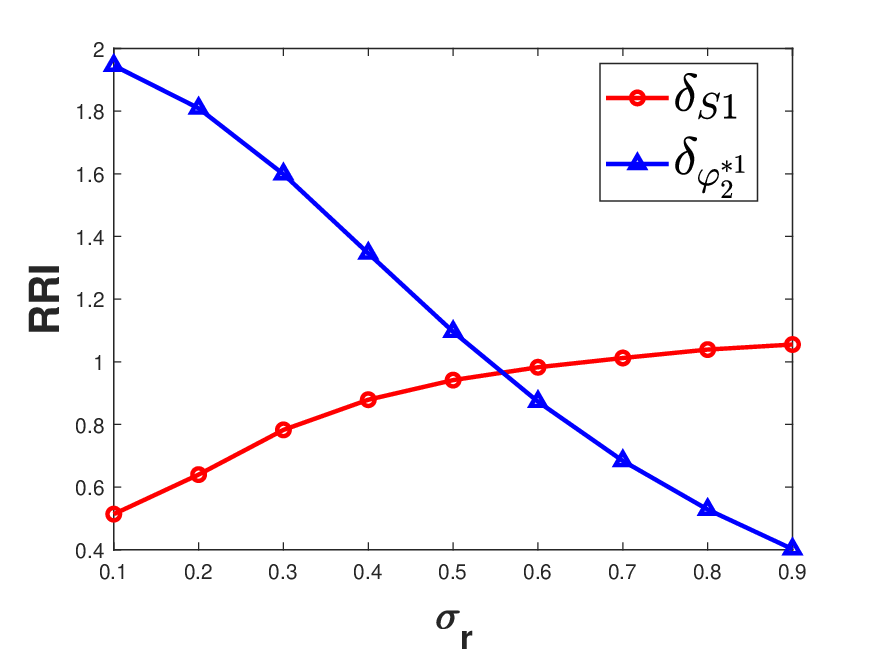}
				\caption{$(n_1,n_2)=(4,6)$, $\mu=0.4$}
			\end{subfigure}
			\vspace{0.3cm}
			\begin{subfigure}{0.32\textwidth}
				\centering
				\includegraphics[width=\linewidth]{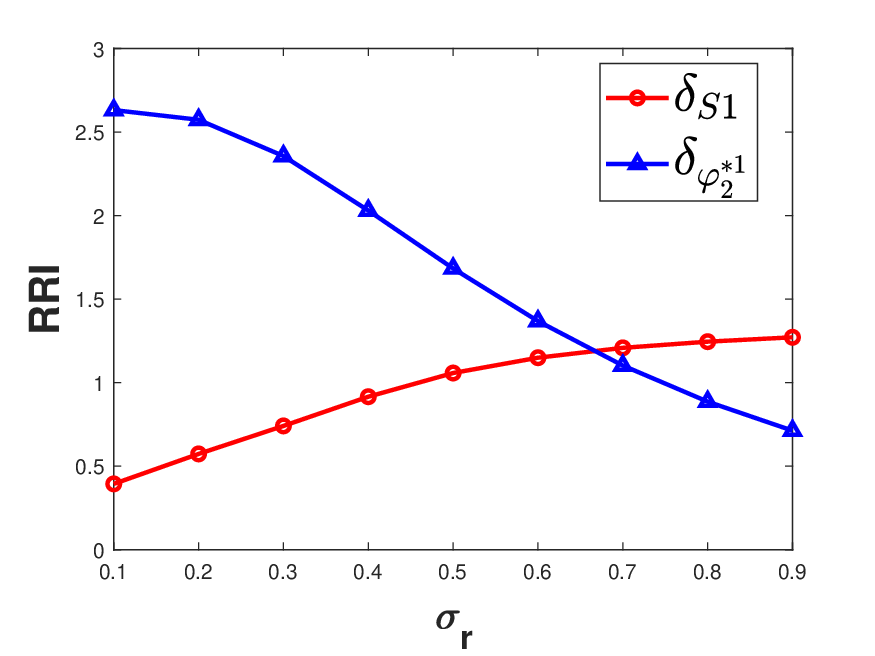}
				\caption{$(n_1,n_2)=(5,8)$, $\mu=0.4$}
			\end{subfigure}
			\hfill
			\begin{subfigure}{0.32\textwidth}
				\centering
				\includegraphics[width=\linewidth]{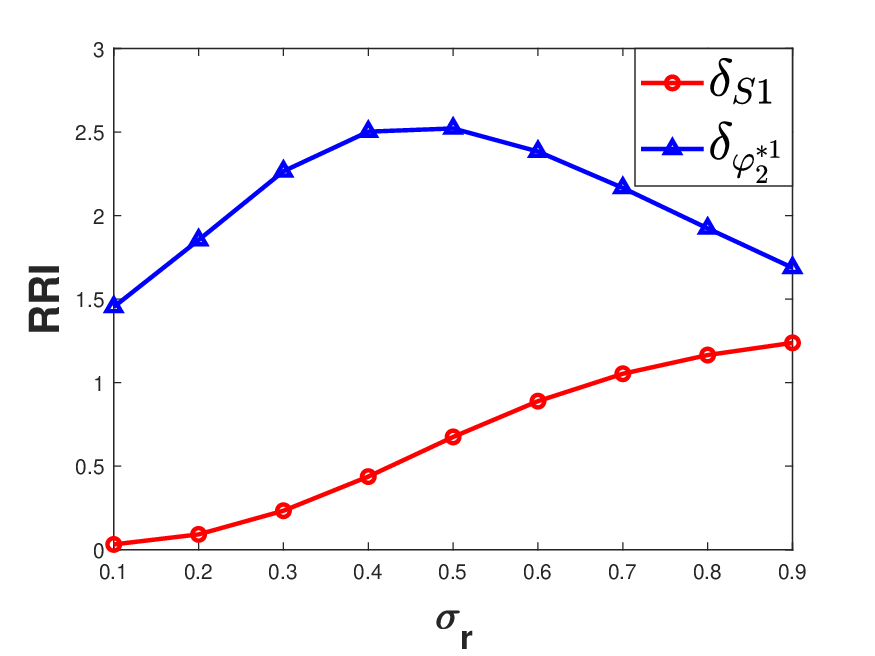}
				\caption{$(n_1,n_2)=(7,9)$, $\mu=0.5$}
			\end{subfigure}
			\hfill
			\begin{subfigure}{0.32\textwidth}
				\centering
				\includegraphics[width=\linewidth]{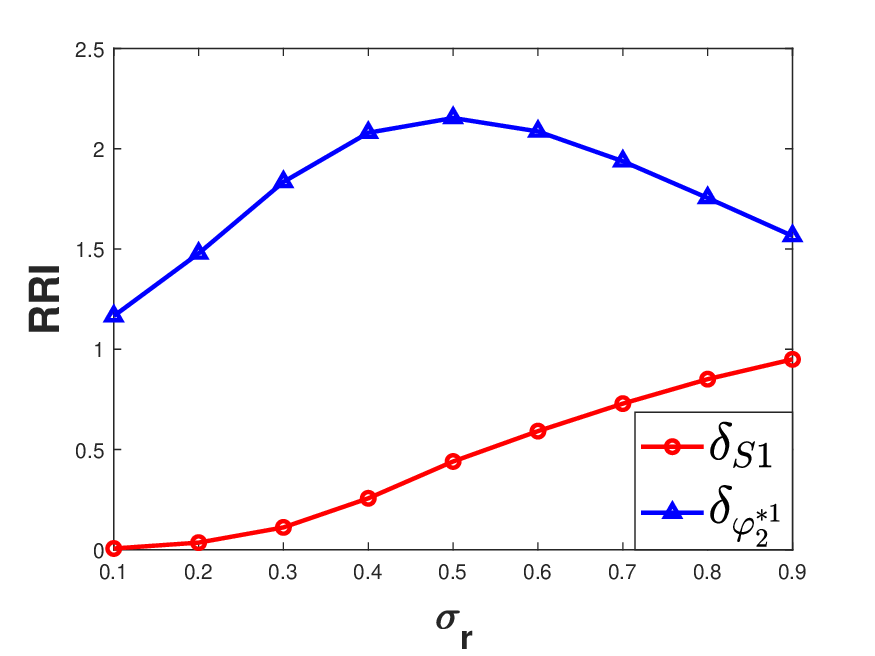}
				\caption{$(n_1,n_2)=(8,8)$, $\mu=0.5$}
			\end{subfigure}
			
			\vspace{0.3cm}
			
			\begin{subfigure}{0.32\textwidth}
				\centering
				\includegraphics[width=\linewidth]{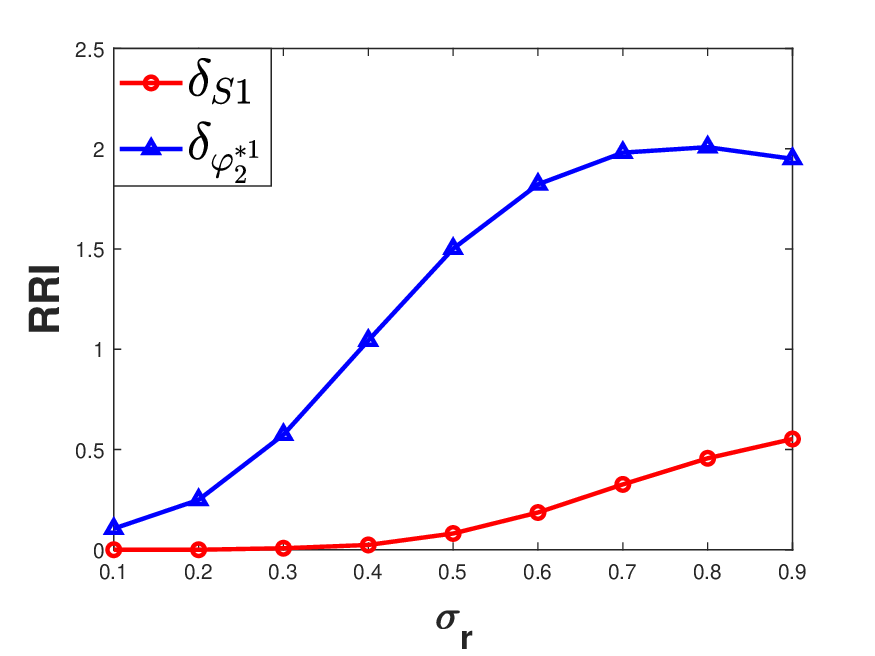}
				\caption{$(n_1,n_2)=(10,12)$, $\mu=0.6$}
			\end{subfigure}
			\hfill
			\begin{subfigure}{0.32\textwidth}
				\centering
				\includegraphics[width=\linewidth]{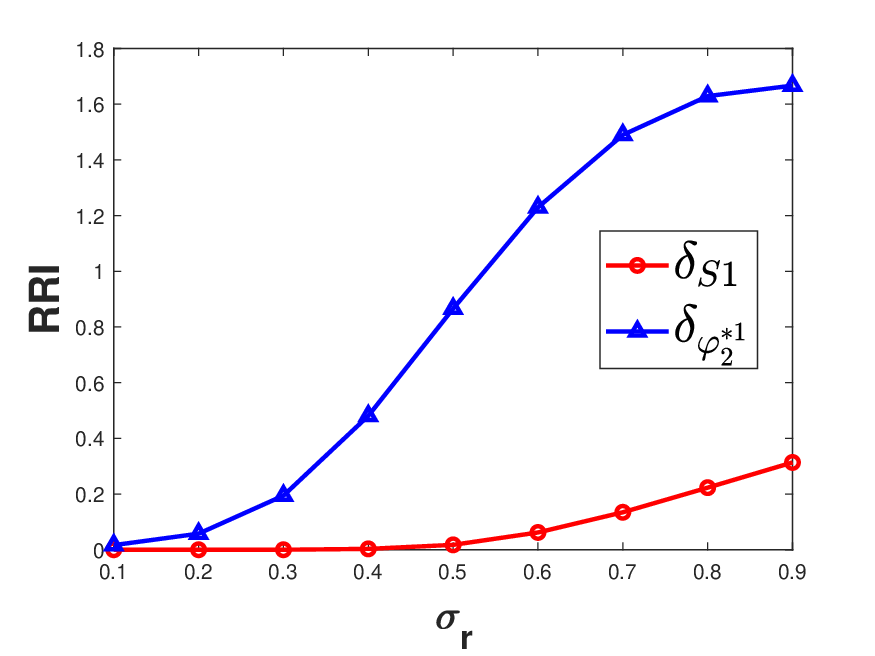}
				\caption{$(n_1,n_2)=(13,13)$, $\mu=0.6$}
			\end{subfigure}
			\hfill
			\begin{subfigure}{0.32\textwidth}
				\centering
				\includegraphics[width=\linewidth]{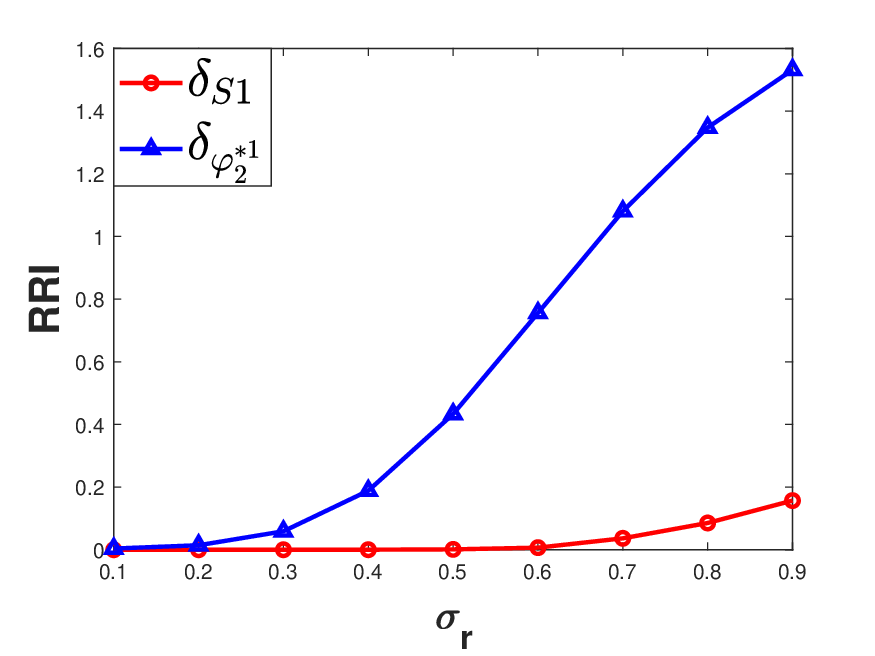}
				\caption{$(n_1,n_2)=(14,11)$, $\mu=0.7$}
			\end{subfigure}
			
			\caption{RRI of improved estimators for $\sigma_r$ under $L_1(t)$.}
			\label{fig1}
		\end{figure}

		\begin{figure}[t]
			\centering
			\begin{subfigure}{0.32\textwidth}
				\centering
				\includegraphics[width=\linewidth]{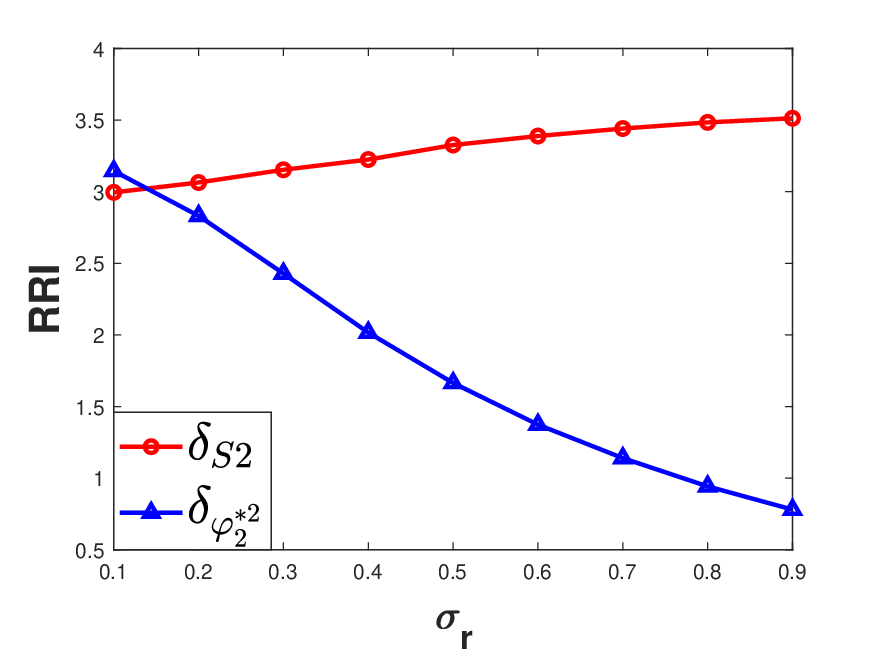}
				\caption{$(n_1,n_2)=(4,5)$, $\mu=0.3$}
			\end{subfigure}
			\hfill
			\begin{subfigure}{0.32\textwidth}
				\centering
				\includegraphics[width=\linewidth]{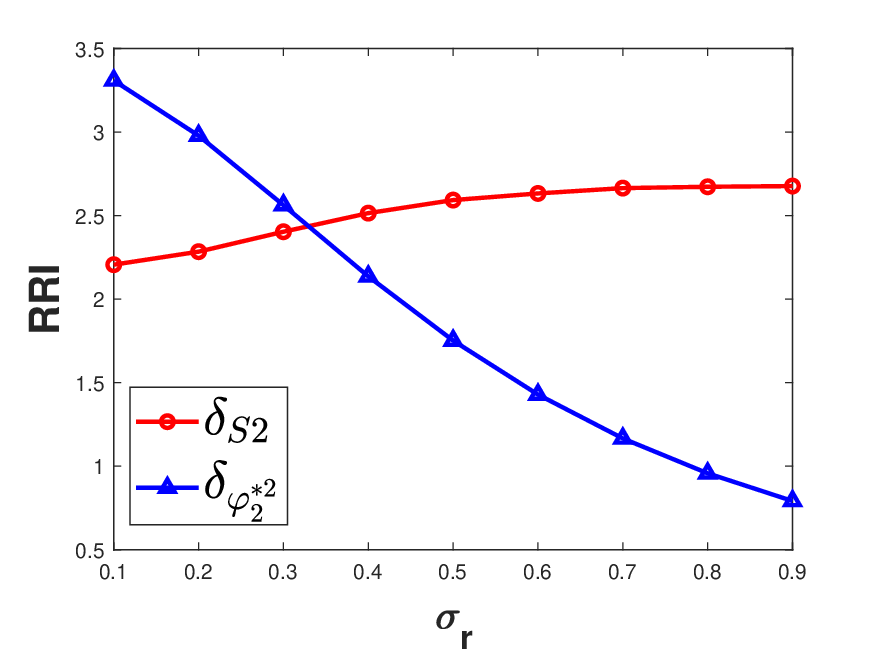}
				\caption{$(n_1,n_2)=(5,5)$, $\mu=0.3$}
			\end{subfigure}
			\hfill
			\begin{subfigure}{0.32\textwidth}
				\centering
				\includegraphics[width=\linewidth]{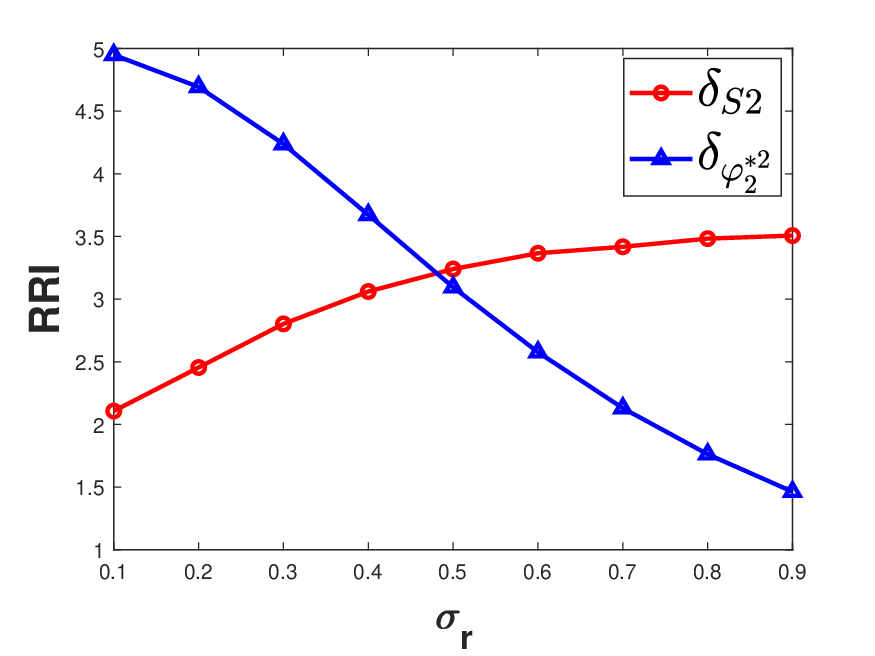}
				\caption{$(n_1,n_2)=(4,6)$, $\mu=0.4$}
			\end{subfigure}
			
			\vspace{0.3cm}
			
			\begin{subfigure}{0.32\textwidth}
				\centering
				\includegraphics[width=\linewidth]{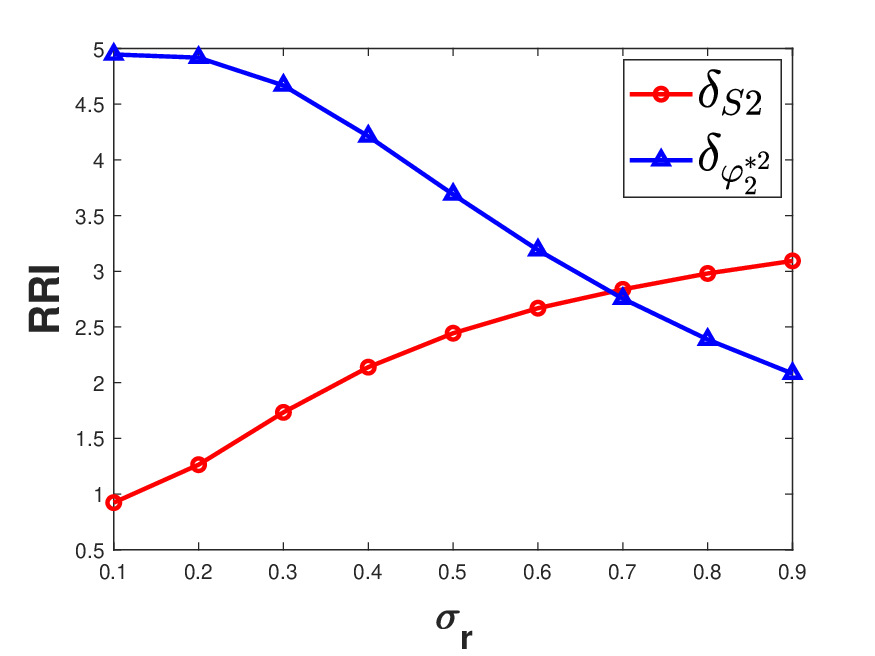}
				\caption{$(n_1,n_2)=(5,8)$, $\mu=0.4$}
			\end{subfigure}
			\hfill
			\begin{subfigure}{0.32\textwidth}
				\centering
				\includegraphics[width=\linewidth]{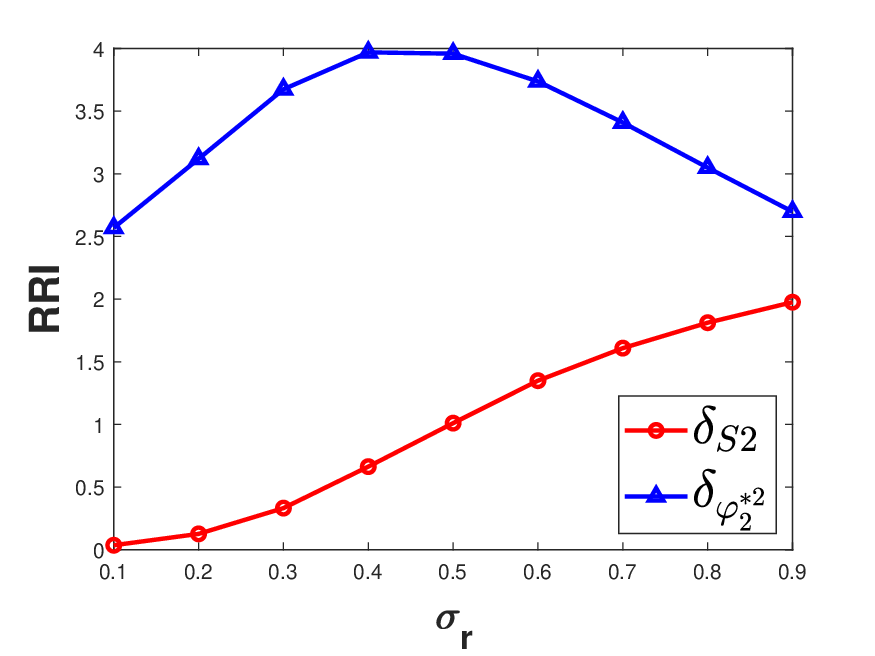}
				\caption{$(n_1,n_2)=(7,9)$, $\mu=0.5$}
			\end{subfigure}
			\hfill
			\begin{subfigure}{0.32\textwidth}
				\centering
				\includegraphics[width=\linewidth]{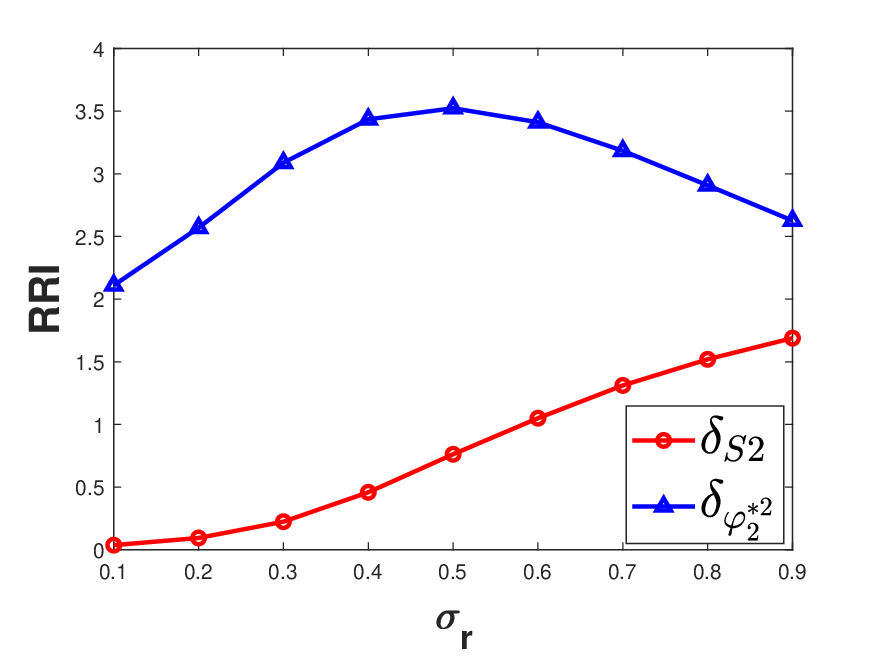}
				\caption{$(n_1,n_2)=(8,8)$, $\mu=0.5$}
			\end{subfigure}
			
			\vspace{0.3cm}
			
			\begin{subfigure}{0.32\textwidth}
				\centering
				\includegraphics[width=\linewidth]{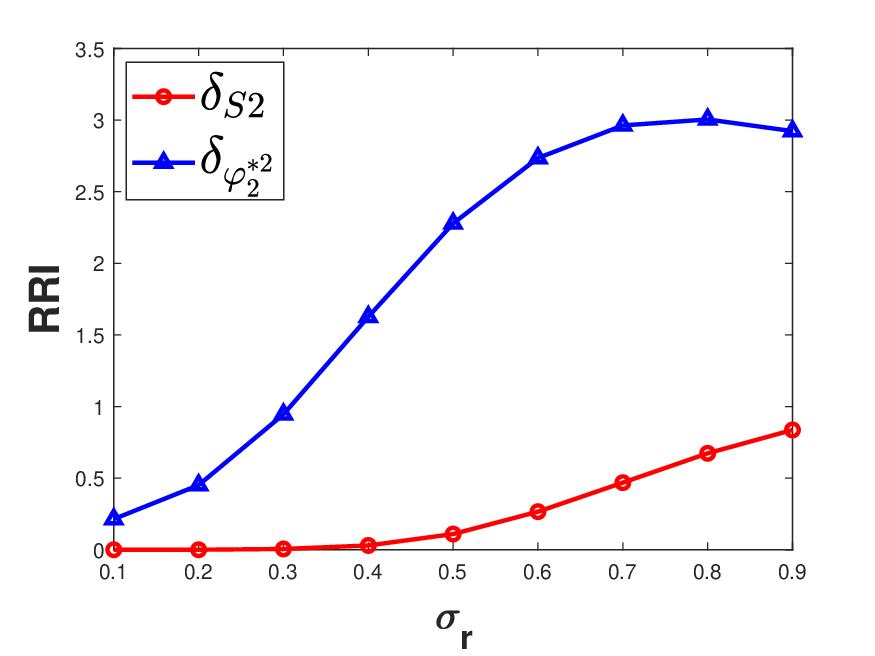}
				\caption{$(n_1,n_2)=(10,12)$, $\mu=0.6$}
			\end{subfigure}
			\hfill
			\begin{subfigure}{0.32\textwidth}
				\centering
				\includegraphics[width=\linewidth]{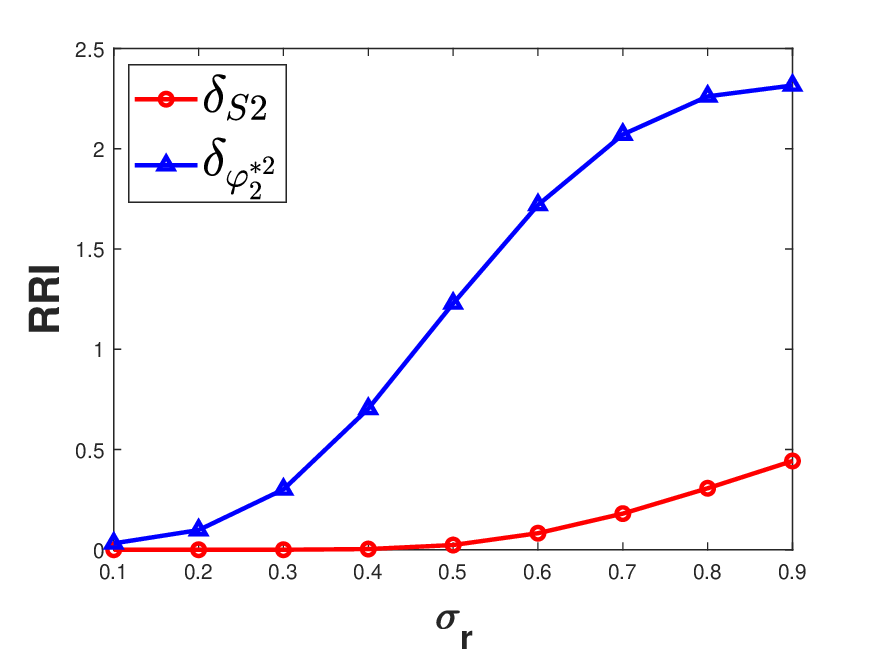}
				\caption{$(n_1,n_2)=(13,13)$, $\mu=0.6$}
			\end{subfigure}
			\hfill
			\begin{subfigure}{0.32\textwidth}
				\centering
				\includegraphics[width=\linewidth]{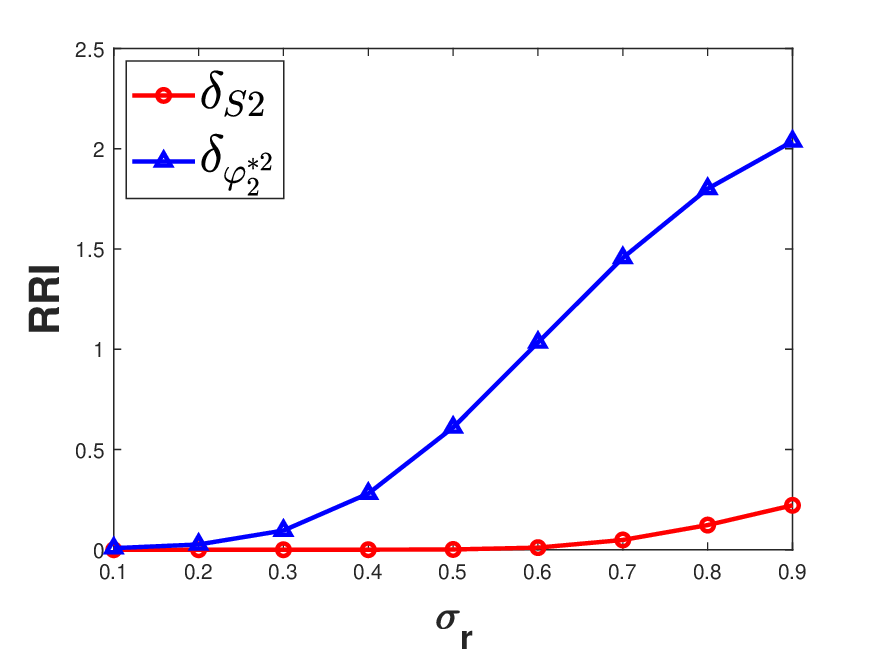}
				\caption{$(n_1,n_2)=(14,11)$, $\mu=0.7$}
			\end{subfigure}
			
			\caption{RRI of improved estimators for $\sigma_r$ under $L_2(t)$.}
			\label{fig2}
		\end{figure}
		
		\begin{figure}[t]
			\centering
			\begin{subfigure}{0.32\textwidth}
				\centering
				\includegraphics[width=\linewidth]{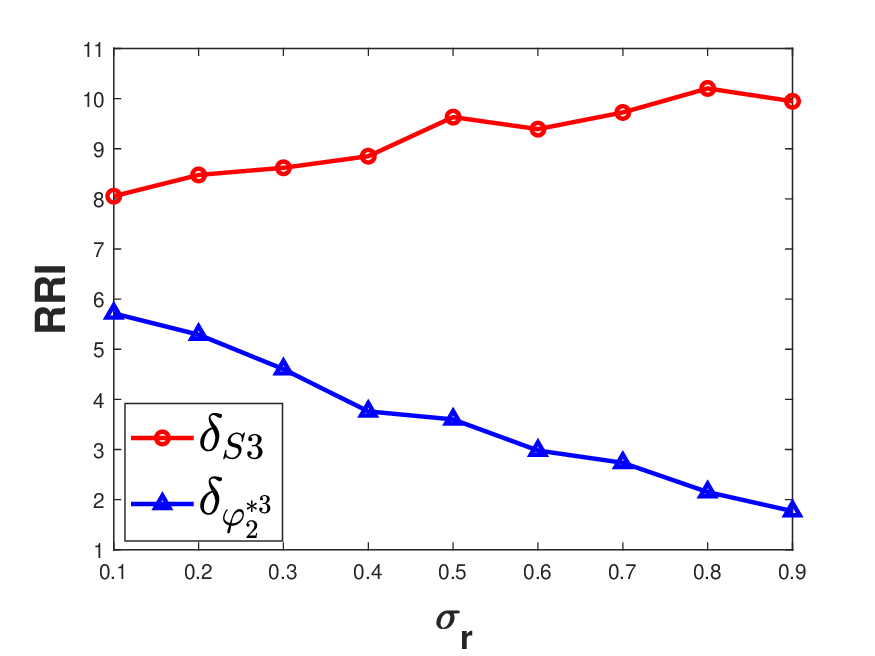}
				\caption{$(n_1,n_2)=(4,5)$, $\mu=0.3$}
			\end{subfigure}
			\hfill
			\begin{subfigure}{0.32\textwidth}
				\centering
				\includegraphics[width=\linewidth]{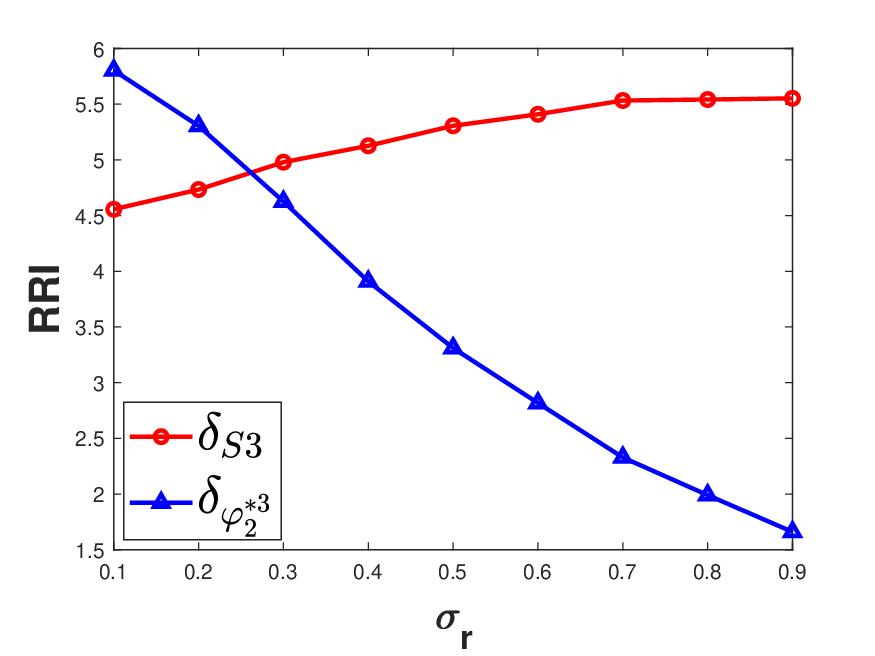}
				\caption{$(n_1,n_2)=(5,5)$, $\mu=0.3$}
			\end{subfigure}
			\hfill
			\begin{subfigure}{0.32\textwidth}
				\centering
				\includegraphics[width=\linewidth]{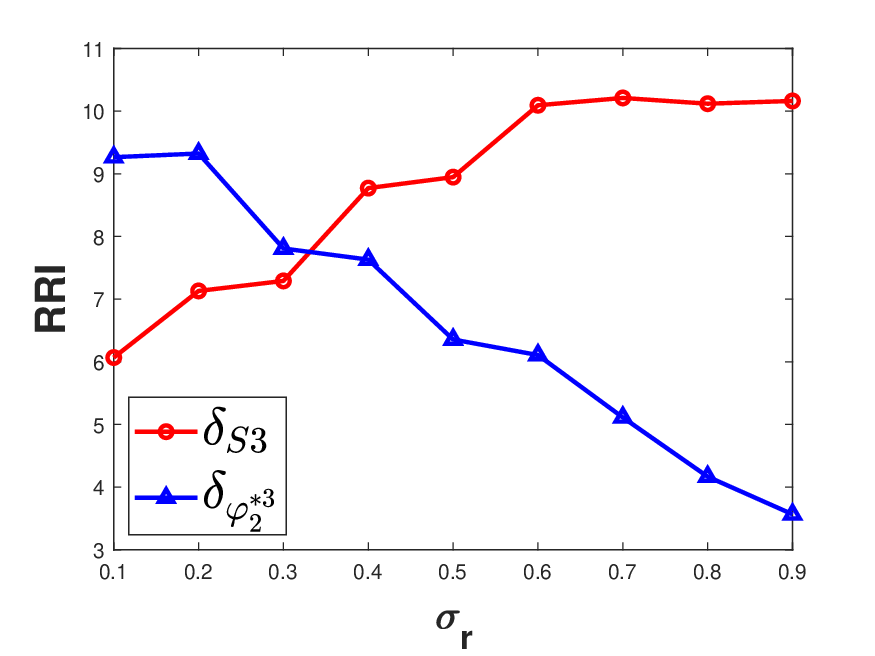}
				\caption{$(n_1,n_2)=(4,6)$, $\mu=0.4$}
			\end{subfigure}
			
			\vspace{0.3cm}
			
			\begin{subfigure}{0.32\textwidth}
				\centering
				\includegraphics[width=\linewidth]{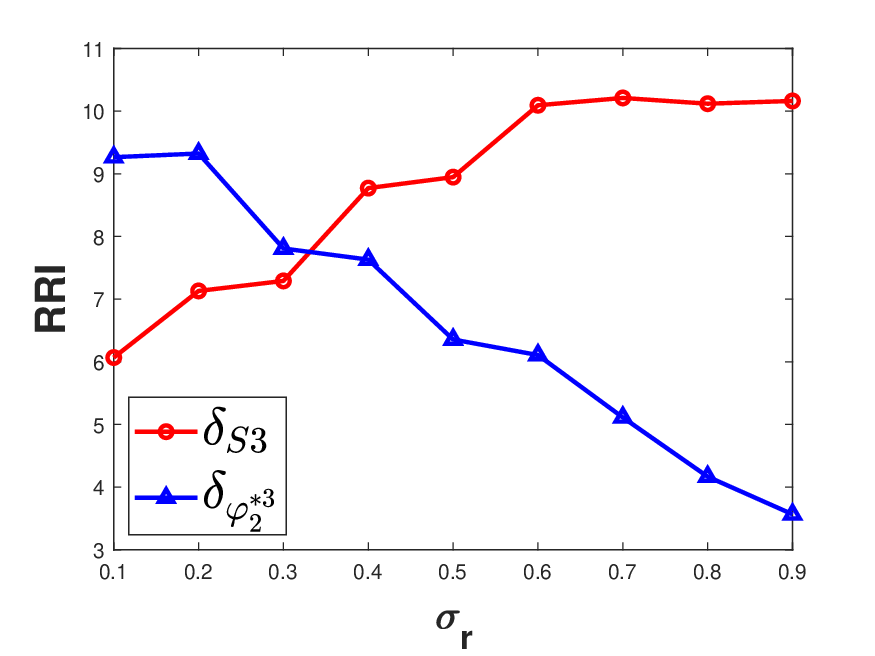}
				\caption{$(n_1,n_2)=(5,8)$, $\mu=0.4$}
			\end{subfigure}
			\hfill
			\begin{subfigure}{0.32\textwidth}
				\centering
				\includegraphics[width=\linewidth]{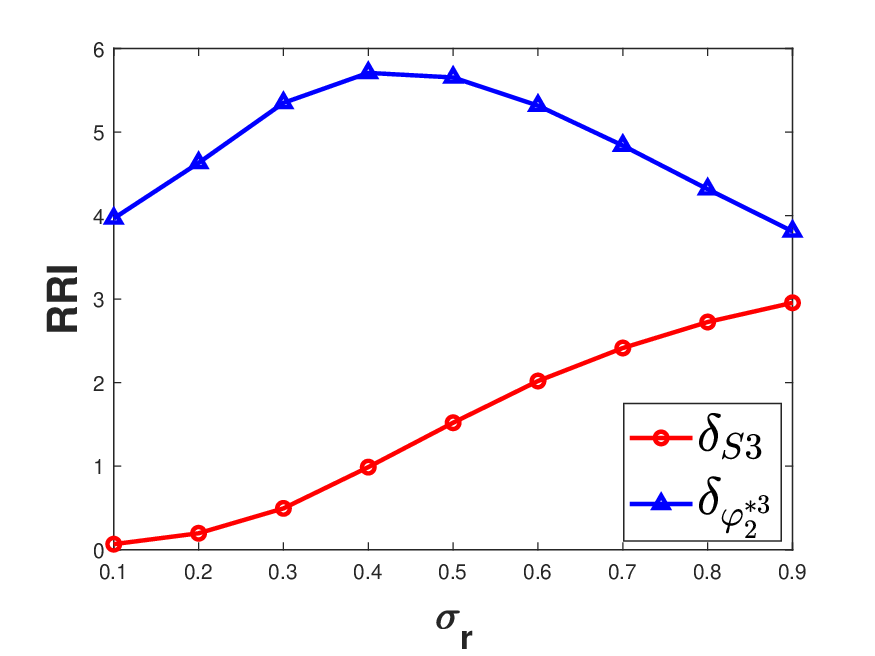}
				\caption{$(n_1,n_2)=(7,9)$, $\mu=0.5$}
			\end{subfigure}
			\hfill
			\begin{subfigure}{0.32\textwidth}
				\centering
				\includegraphics[width=\linewidth]{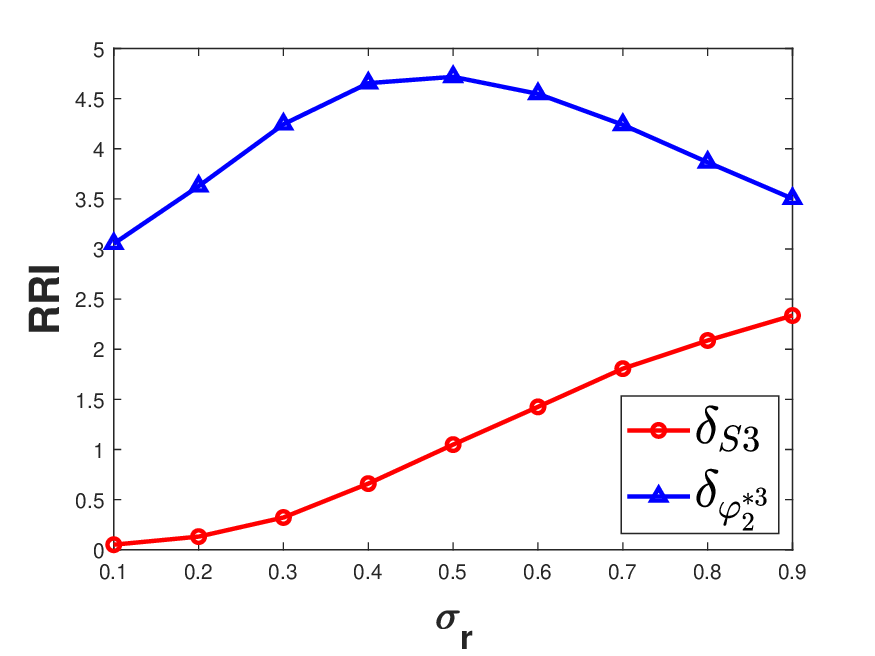}
				\caption{$(n_1,n_2)=(8,8)$, $\mu=0.5$}
			\end{subfigure}
			
			\vspace{0.3cm}
			\begin{subfigure}{0.32\textwidth}
				\centering
				\includegraphics[width=\linewidth]{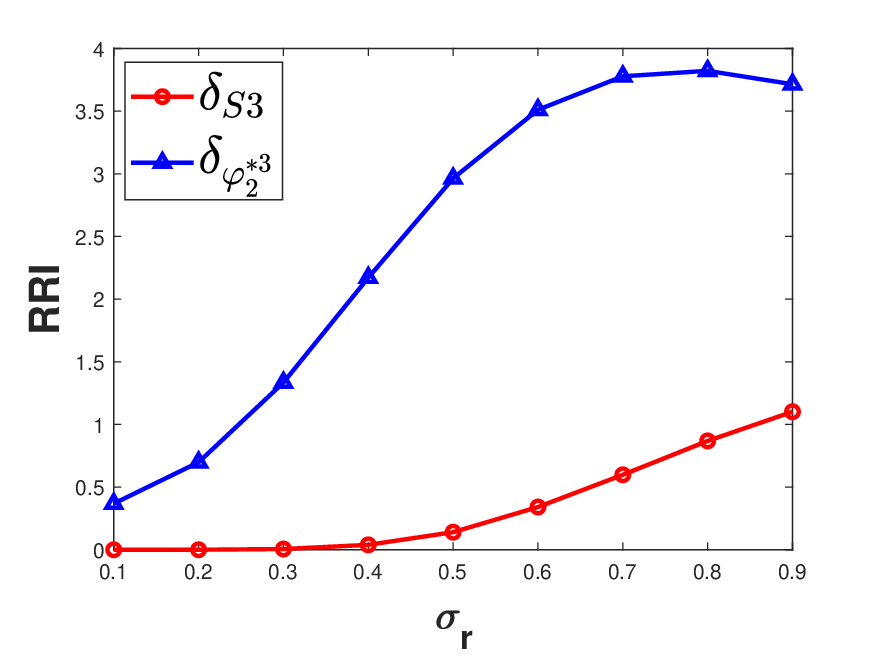}
				\caption{$(n_1,n_2)=(10,12)$, $\mu=0.6$}
			\end{subfigure}
			\hfill
			\begin{subfigure}{0.32\textwidth}
				\centering
				\includegraphics[width=\linewidth]{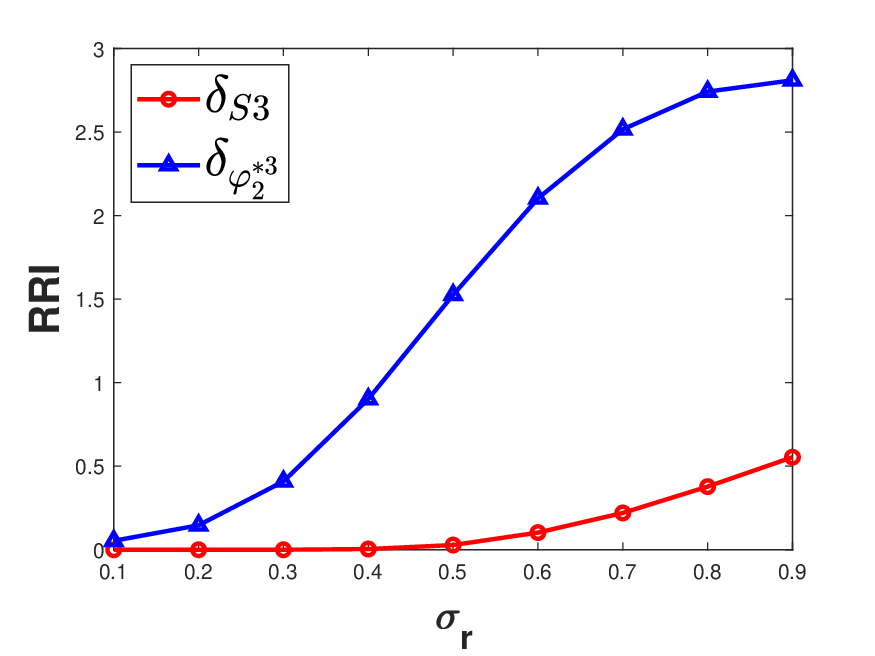}
				\caption{$(n_1,n_2)=(13,13)$, $\mu=0.6$}
			\end{subfigure}
			\hfill
			\begin{subfigure}{0.32\textwidth}
				\centering
				\includegraphics[width=\linewidth]{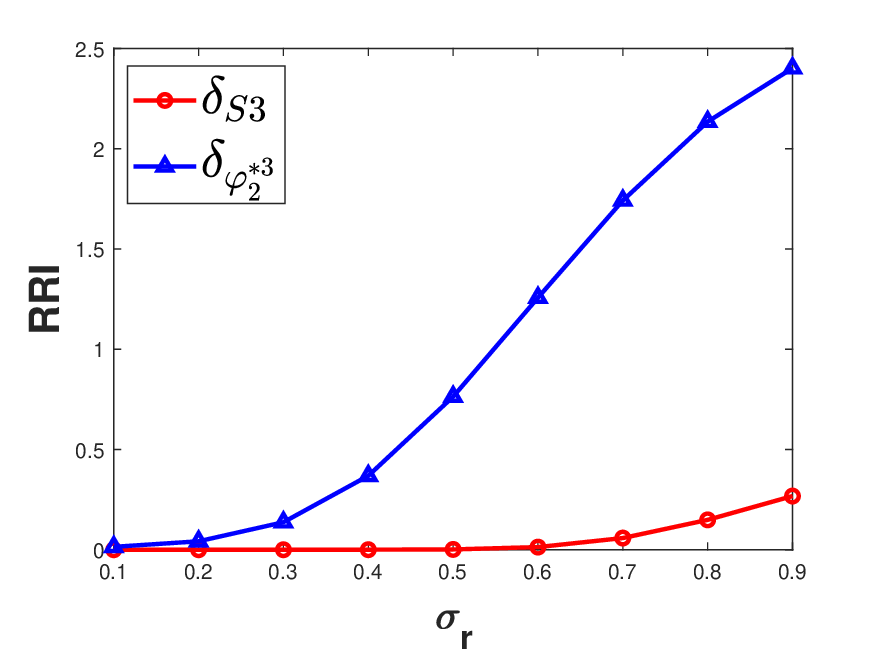}
				\caption{$(n_1,n_2)=(14,11)$, $\mu=0.7$}
			\end{subfigure}
			
			\caption{RRI of improved estimators for $\sigma_r$ under $L_3(t)$.}
			\label{fig3}
		\end{figure}
		\clearpage
		\section{Confidence intervals for $\sigma_r$}\label{se5}
		In this section, we present several methods for obtaining the confidence intervals for the ratio $\sigma_r$. In particular, we study asymptotic, bootstrap, generalized, and highest posterior density confidence intervals. Finally, simulation studies are conducted to examine the performance of these interval estimation methods. In the simulation study, the interval estimation methods are compared using coverage probability (CP), average length (AL), and probability coverage density (PCD) in order to identify the best-performing method across various sample sizes and parameter configurations.
		
		%#############################################################################
		\begin{align}
			y_{ij}=\mu+a_i+e_{ij},\ i=1,\dots,n_1;\ j=1,\dots,n_2,
		\end{align}
		where $a_i$ are independent $N(0,\sigma_1^2)$ and $e_{ij}$ are independent $N(0,\sigma_2^2)$ with $\sigma_1^2\geq 0$, $\sigma_2^2>0$ and $-\infty <\mu<\infty$. It follows that $y_i=(y_{i1}, y_{i2},\dots,y_{ij})$ are multivariate normal with mean $(\mu,\mu,\dots,\mu)$ and covariance matrix
		$\Sigma=\sigma^2\begin{pmatrix}
			1 & \rho & \dots & \rho\\
			\rho & 1 & \dots & \rho\\
			\vdots & \vdots & \vdots \\
			\rho & \rho & \dots & \rho
		\end{pmatrix}$
		where $\sigma^2=\sigma_1^2+\sigma_2^2$ and $\rho=\sigma_1^2/\sigma^2$. Using the factorization theorem and Basu's theorem a complete sufficient statistics for $\theta=(\mu,\sigma_1^2,\sigma_2^2)$ with independent coordinate is given by $S=(Y, S_1^2, S_2^2)$ where 	
		\begin{align*}
			&	Y=\sum_{i=1}^{n_1}\sum_{j=1}^{n_2}y_{ij}/(n_1n_2) \sim N(\mu, \tau^2/(n_1n_2))\\ 
			&	S_1^2=n_2\sum_{i=1}^{n_1}(y_i-y)^2 \sim \tau^2\chi^2_{n_1-1}\\
			&	S_2^2=\sum_{i=1}^{n_1}\sum_{j=1}^{n_2} (y_{ij}-y_i)^2 \sim \sigma_2^2\chi^2_{n_1(n_2-1)}
		\end{align*}
		where $\tau^2=\sigma_2^2+n_2\sigma_1^2$.
		%#############################################################################
		\subsection{Asymptotic confidence interval}\label{as1}
		In this subsection, we obtain an asymptotic confidence interval (ACI) for the parameter $\sigma_r$. To construct the asymptotic confidence interval (ACI), it is necessary to obtain the MLE of $\sigma_r$, which depends on the MLEs of the parameters $\mu$, $\sigma_1$ and $\sigma_2$. To derive the MLEs of these parameters, we consider the log-likelihood function based on the iid random sample $y_{ij}$
		\begin{align}
			l(\mu,\sigma_1,\sigma_2)=-\frac{n_1}{2}\ln(\tau^2)-\frac{n_1n_2(y-\mu)^2+s_1^2}{2\tau^2}-\frac{n_1(n_2-1)\ln \sigma_2^2}{2}-\frac{s_2^2}{2\sigma_2^2}
		\end{align}
		where $(y,s_1^2,s_2^2)$ are observed value of $(Y,S_1^2,S_2^2)$. The MLEs of $\mu$, $\tau^2$ $\sigma_1^2$, $\sigma_2^2$ are obtained as 
		\begin{align*}
			&\widehat{\mu}=Y, &\widehat{\tau}^2=\frac{S_1^2}{n_1},&& \widehat{\sigma}_2^2=\frac{S_2^2}{n_1(n_2-1)},&&\widehat{\sigma}_1^2=\frac{S_1^2}{n_1}-\frac{n_2S_2^2}{n_1(n_2-1)}.
		\end{align*}
		Since $\sigma_r=\frac{\sigma_1^2}{\sigma_2^2}$ then corresponding MLE of $\sigma_r$ can be written as $\widehat{\sigma}_r=\displaystyle\frac{\widehat{\tau}^2}{\widehat{\sigma}_2^2}-n_2=\frac{(n_2-1)S_1^2}{S_2^2}-n_2$. To derive the ACI for the ratio $\sigma_r$, we first obtain the Fisher information matrix. Let us denote the parameter by $\theta_1=\mu$, $\theta_2=\tau^2$, $\theta_3=\sigma_2^2$ and let $I(\theta_1,\theta_2,\theta_3)$ represents the Fisher information matrix. The $lm$-th entry is calculated as
		\begin{align*}
			I_{lm}=-E\left(\frac{\partial^2 l }{\partial\theta_l\partial \theta_m}\right);\ \  l,m=1,2,3
		\end{align*}
		After straightforward calculation, the Fisher information matrix can be expressed as follows
		\begin{align*}
			I(\mu,\tau^2, \sigma_2^2)=\begin{pmatrix}
				\frac{n_1n_2}{\tau} & 0&0\\
				0& \frac{n_1}{2\tau^4} &0\\
				0&0&\frac{n_1(n_2-1)}{2\sigma_2^4}
			\end{pmatrix}
		\end{align*}
		The inverse of $I(\mu,\tau^2,\sigma_2^2)$ is obtain as follows
		\begin{align*}
			I^{-1}(\mu,\tau^2, \sigma_2^2)=\begin{pmatrix}
				\frac{\tau}{n_1n_2}& 0&0\\
				0& \frac{2\tau^4}{n_1}&0\\
				0&0&\frac{2\sigma_2^4}{n_1(n_2-1)}
			\end{pmatrix}
		\end{align*}
		We now apply the well-known Delta method to derive the asymptotic confidence interval for the parameter $\sigma_r$. Let $g(\mu,\tau^2,\sigma_2^2)=\frac{\tau^2}{\sigma_2^2}-n_2$, and denote the gradient of $g$ by $\nabla g(\mu,\tau^2,\sigma_2^2)$. Then $\nabla g(\mu,\tau^2,\sigma_2^2)=\left(0,\frac{1}{\sigma_2^2}, -\frac{\tau^2}{\sigma_2^4}\right)^t$. Hence $\widehat{\sigma}_{rML}=g(\widehat{\mu}_{ML},\widehat{\tau}^2_{ML},\widehat{\sigma}_{2ML}^2)= \displaystyle \frac{\widehat{\tau}_{ML}^2}{\widehat{\sigma}_{2ML}^2}-n_2$ is asymptotically distributed as $N(\sigma_r, \nabla g^tI^{-1}\nabla g)$, where $ \nabla g^tI^{-1}\nabla g = \displaystyle\frac{2n_2(\sigma_r + n_2)^2}{n_1(n_2-1)}$. Using the above results, the $(1-\alpha)100\%$ asymptotic confidence interval for the ratio $\sigma_r$ is given by 
		\begin{align}
			\widehat{\sigma}_{rML}\pm z_{\frac{\alpha}{2}}\left(\widehat{\sigma}_{rML}+n_2\right) \sqrt{\frac{2n_2}{n_1(n_2-1)}}
		\end{align}
		where $z_{\frac{\alpha}{2}}$ is the upper $\alpha/2$ quantile of the standard normal distribution.
		
		In the following section, we present two additional approximate confidence intervals for the ratio $\sigma_r$.
		\subsection{Parametric bootstrap confidence intervals}
		The normal approximation performs reasonably well for large effective sample sizes. However, its accuracy may deteriorate when the sample size is small. In such cases, resampling methods, such as the bootstrap technique, can provide more accurate approximate confidence intervals. In this section, two bootstrap resampling methods are proposed for constructing approximate confidence intervals for the parameter $\sigma_r$.
		
		\subsubsection{Bootstrap-p confidence interval}\label{1}
		In this subsection, we construct confidence intervals using the percentile parametric bootstrap method, see \cite{efron1994introduction}. The bootstrap percentile method introduced by \cite{efron1982jackknife}. The following algorithm describes the procedure for constructing percentile bootstrap-p confidence intervals.
		\begin{itemize}
			\item [(i)] From the given observed sample $\{Y_{ij}\}$, compute the MLEs of the parameters $\mu$, $\sigma_1^2$, $\sigma_2^2$ as $\widehat{\mu}$, $\widehat{\sigma}_1^2$, $\widehat{\sigma}_2^2$ respectively.
			\item [(ii)] Generate a bootstrap sample $\{Y_{ij}^{*}=\widehat{\mu} + a_{i}^{*} + e_{ij}^{*}\}$. To generate bootstrap random sample, we use the MLEs. Specifically, the bootstrap random effects as $a_i^{*}\sim N(0,\widehat{\sigma}_{1ML}^2)$ and the bootstrap errors are generated as $e^{*}_{ij} \sim N(0,\widehat{\sigma}^2_{2ML})$. Then compute the bootstrap MLE of $\sigma_r$ using these bootstrap samples and obtain as $\nu^{*}=\widehat{\sigma}^{*}_{rML}=\displaystyle \frac{\widehat{\sigma}^{*2}_1}{\widehat{\sigma}_2^{*2}}$
			\item [(iii)] Repeat Step $(ii)$ independently $B$ times to obtain the bootstrap MLE of $\sigma_r$ as $\nu^{*}_1,\nu_2^{*},\dots,\nu_{B}^{*}$. Then arrange these estimates in the ascending order of magnitude.
			\item [(iv)] Let $F(x)=P(\nu^{*}\leq x)$ be the empirical bootstrap distribution function of $\widehat{\sigma}_{rML}$. Then the approximate $(1-\alpha)100\%$ bootstrap-p confidence interval of the ratio $\sigma_r$ is given by 
			\begin{align*}
				\left(
				\nu^{*}_{\left(\lceil B\alpha/2 \rceil\right)},
				\;
				\nu^{*}_{\left(\lceil B(1-\alpha/2) \rceil\right)}
				\right),
			\end{align*}
			where $F^{-1}(x)$ as the empirical quantile function of the bootstrap replicates $\nu_1^*, \dots, \nu_{B}^*$ (sorted ascending), so that the $\lceil Bx \rceil$-th order statistics estimates $F^{-1}(x)$.
		\end{itemize}
		In the following section, we present the algorithmic procedure for constructing the bootstrap-t confidence intervals for $\sigma_r$.
		
		\subsubsection{Bootstrap-t confidence interval}
		The bootstrap-t confidence interval was introduced by \cite{hall1988bootstrap}. The following steps describe the construction of an approximate confidence interval for $\sigma_r$ using the bootstrap-t method.
		\begin{itemize}
			\item [(i)] The first step is identical to Step $(i)$ of the bootstrap-p method in Sub-subsection \ref{1} where the MLEs of the parameter $\mu$, $\sigma_1^2$, $\sigma_2^2$ are computed. Using these MLEs obtain the MLE of the ratio $\sigma_r$ denoted by $\nu = \widehat{\sigma} _{rML}$. Also compute the asymptotic variance estimate from Subsection \ref{as1} as $	\widehat{\mbox{var}}(\nu)=\displaystyle\frac{2n_2(\nu + n_2)^2}{n_1(n_2-1)}.$ We denote $\widehat{\mbox{SE}}(\nu)=\displaystyle \sqrt{\frac{2n_2(\nu + n_2)^2}{n_1(n_2-1)}}.$
			\item [(ii)] Generate bootstrap sample $\{Y_{ij}^{*}=\widehat{\mu} + a_{i}^{*} + e_{ij}^{*}\}$ as Step $(ii)$ of Sub-subsection \ref{1}.
			\item [(iii)] For each bootstrap replicate $b=1,\dots B$, compute $\nu_b^*=\widehat{\sigma}_{rML}^{*(b)}$ then calculate the bootstrap-t statistic as $T_{b}^{*}=\displaystyle \frac{\nu_b^*-\nu}{\widehat{\mbox{SE}}(\nu_b^{*})}$.
			\item [(iv)] Repeat Steps $(ii)$ and $(iii)$ independently $B$ times and arrange the resulting bootstrap statistics as $T_1^*\leq T_2^* \leq \dots \leq T_B^*$.
			\item [(v)] Let $G(x)=P(T^*\leq x)$ be the empirical distribution $T_1^*, \dots, T^*_B$. Then the approximate $(1-\alpha)100\%$ bootstrap-t confidence interval for $\sigma_r$ is 
			\begin{align*}
				\left( \nu - T^*_{\lceil (1-\alpha / 2)B \rceil}\widehat{SE}(\nu),\ \nu - T^*_{ \lceil(\alpha / 2)B\rceil}\widehat{SE}(\nu) \right)
			\end{align*}
		\end{itemize}
		In the following subsection, we introduce the generalized variable approach and use it to derive additional confidence intervals.
		
		\subsection{Generalized confidence interval} %for the ratio $\sigma_r$}
	In this subsection, we discuss the generalized variable approach proposed by \cite{tsui1989generalized} and \cite{weerahandi1993generalized} for constructing confidence intervals. This method was successfully applied by \cite{krishnamoorthy2003inferences} for hypothesis testing and for constructing confidence intervals for the common mean $\mu$ of several normal populations. Related extensions of this approach have also been considered in studies such as \cite{jena2023point} and \cite{khatun2022hypothesis}. The following concepts play an important role in deriving the subsequent results.
	
	\begin{defn}\label{d1}
		Let $X$ be a random variable whose distribution depends on $(\theta,\varepsilon )$, where $\theta$ is the parameter of interest and $\varepsilon$ is a nuisance parameter. A statistic $T=T(X;x,\theta,\varepsilon)$ is called a generalized pivotal variable for constructing a generalized confidence interval for $\theta$ if it satisfies the following conditions:
		\begin{itemize}
			\item [(G1)] The distribution of $T(X;x,\theta,\varepsilon)$, for fixed $X=x$, is free from all unknown parameters.
			\item [(G2)] The observed value of $T(X;x,\theta,\varepsilon)$ at $X=x$ is equal to $\theta$, that is, $T(X;x,\theta,\varepsilon)=\theta$, where $\theta$ is the parameter of interest.
		\end{itemize}
	\end{defn}
	Let $(y,s_1^2,s_2^2)$ be an observed value of $(Y,S_1^2,S_2^2)$. Define $U_1=\frac{S_1^2}{\tau^2}\sim\chi^2_{n_1-1}$, $U_2=\frac{S_2^2}{\sigma_2^2}\sim\chi^2_{n_1(n_2-1)}$ where $\tau^2 =\sigma_2^2(1+n_2\sigma_r)$. The generalized pivot variable for $\sigma_r$ based on the sample $(y,s_1^2,s_2^2)$ is $T=\frac{1}{n_2}\left(\frac{s_1^2U_2}{s_2^2U_1}-1\right).$ It follows immediately from Definition \ref{d1} that $T$ satisfies both conditions of a generalized pivotal variable: its distribution is free from the unknown parameters, and its observed value is equal to the parameter of interest $\sigma_r$. %It can be verified from Definition \ref{d1} that the distribution of $T$ is free from the nuisance parameters and that $T(s,\theta)=\sigma_r$. 
	Therefore $100(1-\alpha)\%$ generalized confidence interval for $\sigma_r$ is given by $$\left(T_{ \left(\frac{\alpha}{2}\right) },\ T_{\left(1 - \frac{\alpha}{2}\right)}\right),$$ where $T_{\frac{\alpha}{2}}$ and $T_{1-\frac{\alpha}{2}}$ denote the corresponding quantiles of $T$.
	
	\subsection{HPD credible interval using MCMC method}
	We emphasize that Bayesian inference plays an important role when prior information about the behavior of the parameters is available in advance. In some situations, these intervals or point estimators may perform better than the classical methods based on MLEs. \cite{box2011bayesian} showed that, among all Bayesian credible intervals with credibility level $(1-\alpha)$, the HPD interval possesses the shortest average length (AL). In this subsection, we derive the HPD credible interval for the ratio $\sigma_r$ using Jeffrey's non-informative prior and an MCMC procedure for the unknown parameters. The Jeffrey's prior is given by
	\begin{align*}
		\pi(\theta)&=\sqrt{|I(\theta)|}, \ \ \theta=(\mu,\tau^2,\sigma_2^2)\\
		&\propto \frac{1}{\big(\tau^2\big)^{\frac{3}{2}} \sigma_2^2} \ \ \mbox{with support}\ \tau^2>0, \sigma_2^2>0, \mu\in\mathbb{R}
	\end{align*}
	The joint posterior density of $\theta$, for the observed sample values, is proportional to the product of the likelihood function and the Jeffrey's prior. From the joint posterior density of $\theta$, the corresponding conditional densities of the parameters are obtained as follows:
	\begin{itemize}
		\item [(i)] $\pi \big(\mu,\tau^2,\sigma_2^2|\mbox{Data}\big) \propto \big(\tau^2\big)^{-\frac{n_1+3}{2}} \big(\sigma_2^2\big)^{-\left(\frac{n_1(n_2-1)}{2}+1\right)} e^{\left(-\frac{n_1n_2(x-\mu)^2+s_1^2}{2\tau^2} - \frac{s_2^2}{2\sigma_2^2}\right)}$
		\item [(ii)] $\mu|\tau^2,\sigma_2^2,\mbox{Data} \sim N(x,\frac{\tau^2}{n_1n_2})$
		\item [(iii)] $\tau^2|\mu,\sigma_2^2,\mbox{Data}\sim IG\left(\frac{n_1+1}{2},\ \frac{n_1n_2(x-\mu)^2+s_1^2}{2}\right)$
		\item [(iv)] $\sigma_2^2|\mu,\tau^2,\mbox{Data} \sim IG\left(\frac{n_1(n_2-1)}{2},\ \frac{s_2^2}{2}\right)$
	\end{itemize}
	To construct an confidence interval for $\sigma_r$, we use the Gibbs sampling procedure to generate samples $\mu$. To construct a confidence interval for $\sigma_r$, we use the Gibbs sampling procedure \citep{gelfand1990sampling}, a special case of the MCMC framework. Since all three full conditional distributions are of standard form Normal for $\mu$ and Inverse-Gamma for $\tau^2$ and $\sigma_2^2$ each parameter is sampled directly and exactly from its full conditional at every iteration, requiring no Metropolis Hastings accept/reject step. The Gibbs sampling algorithm is outlined below.
	%The parameter $\tau^2$ and $\sigma_2^2$ will be generated using the well known Markov Chain Monte Carlo (MCMC) framework along with the Random Walk Metropolis-Hasting (RWMH) algorithm.Detailed descriptions of the Random Walk Metropolis–Hastings algorithm can be found in the works of \cite{hastings1970monte} and \cite{chib1995understanding}. The combined Gibbs sampling and Random Walk Metropolis–Hastings algorithm is outlined below.
	
	\textbf{Gibbs Sampling Algorithm:}
	Let the current state of the Markov chain at the $(m-1)$-th iteration be $\underline{\delta}^{(m-1)}=\left(\mu^{m-1}, \tau^{2(m-1)}, \sigma_2^{2(m-1)}\right)$. For $m=1,2,\dots,M$, generate $\mu^{(m)}\propto N\left(y,\frac{\tau^{2(m-1)}}{n_1n_2}\right)$; $\tau^{2(m)} \propto IG\left(\frac{n_1+1}{2},\ \frac{n_1n_2(y-\mu^{(m)})^2+s_1^2}{2}\right)$; $\sigma_2^{2(m)} \propto IG\left(\frac{n_1(n_2-1)}{2},\ \frac{s_2^2}{2}\right)$. At iteration $m$, generate $\theta_m=\frac{\tau^{2(m)}}{\sigma_2^{2(m)}}-n_2$. This produces posterior samples $\theta_1,\theta_2,\dots,\theta_P$, where $P=M-M_0$. Discard the first $M_0$ burn in draws and retain $P=M-M_0$ posterior samples. Sort these as $\theta_{(1)}\leq \theta_{(2)}\leq \dots \leq \theta_{(M)}$. Find $b^*$ that minimizes the interval width 
	\begin{align*}
		b^*=\arg \min \left(\theta_{(b+\lfloor(1-\alpha)P\rfloor)}-\theta_{b}\right), ~~ b = 1,2, \dots, P-\lfloor(1-\alpha)P\rfloor
	\end{align*}
	Then the $(1-\alpha)100\%$ HPD credible interval for $\sigma_r=\frac{\sigma_1^2}{\sigma_2^2}$ is $\left(\theta_{b^*}, \theta_{(b^*+ \lfloor (1-\alpha)P\rfloor)} \right)$.

	\section{A simulation study for confidence interval} \label{se6}
	In this section we will compare the performances of the derived confidence interval for $\sigma_r$. We use extensive Monte Carlo simulation methods to compare the coverage probability (CP) and average length (AL). We were generated 30,000 random sample from two normal populations $N(0,\sigma_1)$ and $N(0,\sigma_2)$. For computing the bootstrap confidence intervals, we were used 3000 iteration for inner loop corresponding to the replication. For the generalized confidence intervals, the inner loop consists of 10,000 replications. Similarly, for obtaining the HPD credible intervals using the MCMC algorithm, the inner loop is performed for 10,000 replications. furthermore we were computed all the confidence intervals at the significance level $(1-\alpha)=0.95$. When evaluating the performance of the confidence intervals based on their CPs and ALs, we find that some intervals have CPs close to the nominal level of 0.95 but relatively large ALs. On the other hand, some intervals have smaller ALs but their CPs are below 0.95. To deal with this issue and compare the intervals more effectively, we use the following methods. First, we choose a lower acceptable value for the nominal CP level and identify the confidence intervals that achieve this requirement. Among these intervals, we select the one with the smallest AL. The second option is to consider the confidence intervals that achieve the minimum acceptable nominal CP level and then evaluate them using a unified measure called the probability coverage density (PCD), which is defined as the ratio of CP to AL. This unified performance measure was proposed by \cite{unhapipat2016small}. It considers both criteria, CP and AL, simultaneously when evaluating the performance of the confidence intervals. This criterion gives us those intervals which have relatively higher CPs and smaller ALs. Hence, the interval having the higher PCD value implies the best performance. We have presented the results of the simulation study in Figures \ref{p1}--\ref{p5} for the respective confidence intervals. We have reported the simulation study for fixed values of $n_2=8$, $\mu=0$, $\sigma_1=1$ and  $\sigma_2=0.5$.
	
	The following conclusions can be drawn from the simulation study and Figures \ref{p1}--\ref{p5} regarding the performance of the confidence intervals in terms of their ALs, CPs, and PCDs.
	
	\begin{itemize}
		\item [(i)] The ALs decrease as $n_1$ increases for all the confidence intervals. However, the CPs decrease with increasing $n_1$ for the bootstrap-p, HPD credible, and generalized confidence intervals. For the asymptotic and bootstrap-t confidence intervals, the ALs do not show a monotonic pattern with respect to $n_1$.
		
		\item [(ii)] One can observed that the ALs of the confidence intervals bootstrap-p and bootstrap-t are coincide.
		
		\item [(iii)] If we fixed the confidence level at $95\%$, the asymptotic and HPD credible confidence intervals are achieve this.
		
		\item [(iv)] The asymptotic confidence interval provides the highest CP and the smallest AL when the sample size $n_1$ is moderate. When the sample size is small, the HPD credible interval provides the highest CP and the smallest AL.
		
		\item [(v)] The PCD values increase as $n_1$ increases for the asymptotic, generalized, and HPD credible intervals. However, the PCD values decrease for the bootstrap-p interval, while no clear pattern is observed for the bootstrap-t interval.
		
		\item [(vi)] In terms of PCD values, the bootstrap-t confidence interval gives the highest PCD value when $n_1 = 15$ (approximately). The asymptotic and bootstrap-p confidence intervals perform better when the sample size $n_1$ is small.
	\end{itemize}

	\begin{figure}[!t]
		\centering
		\includegraphics[width=0.9\textwidth]{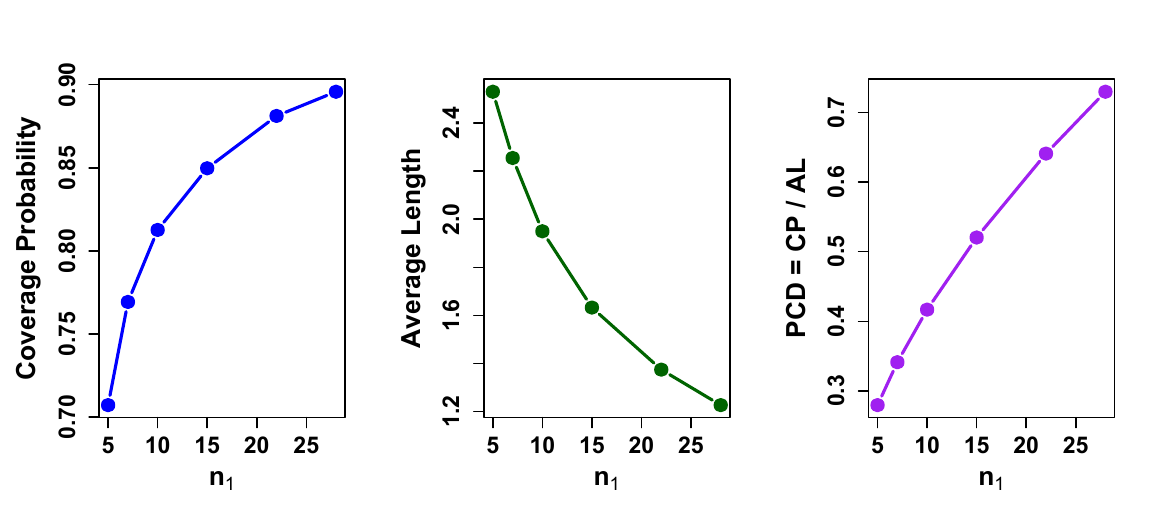}
		\caption{Asymptotic confidence interval for $n_2=8$, $\mu=0$, $\sigma_1=1, \sigma_2=1$}
		\label{p1}
		\vfill
		\includegraphics[width=0.9\textwidth]{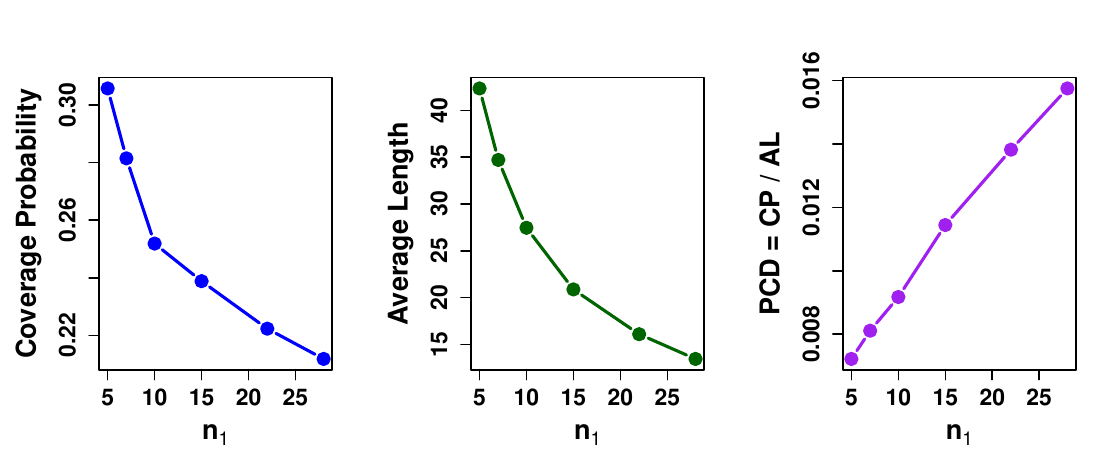}
		\caption{Bootstrap-p for $n_2=8$, $\mu=0$, $\sigma_1=1, \sigma_2=1$}
		\label{p2}
		\vfill
		\includegraphics[width=0.9\textwidth]{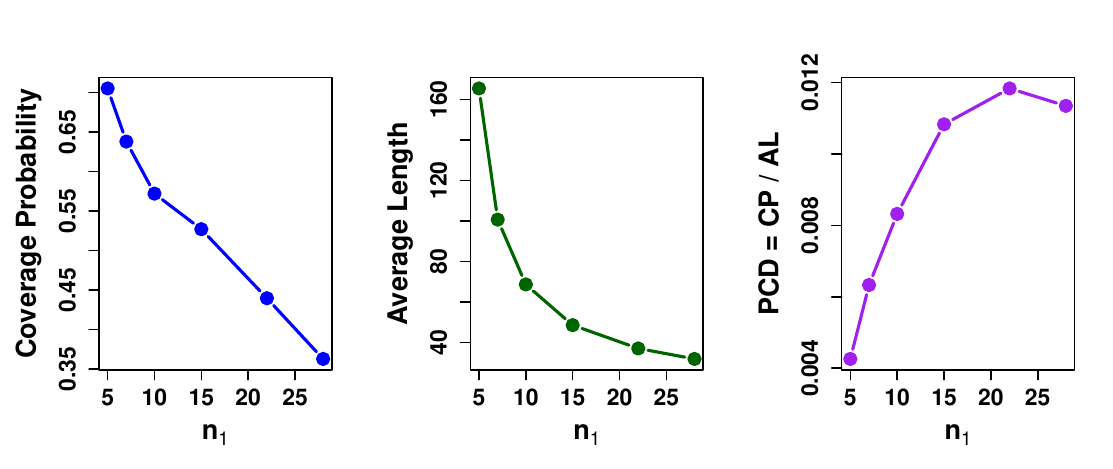}
		\caption{Bootstrap-t for $n_2=8$, $\mu=0$, $\sigma_1=1, \sigma_2=1$}
		\label{p3}
	\end{figure}
	\begin{figure}[!t]
		\centering
		\includegraphics[width=0.9\textwidth]{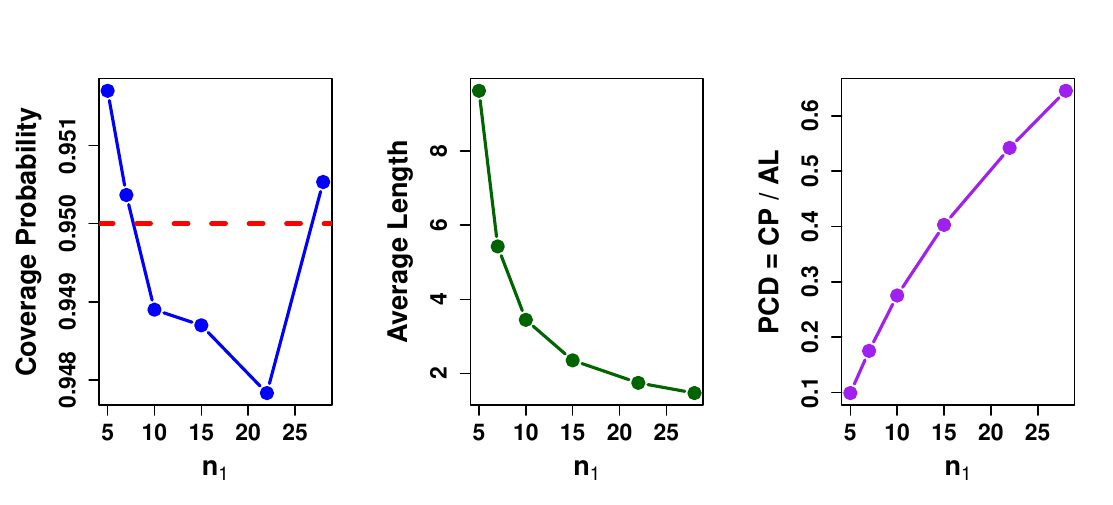}
		\caption{Generalized CI for $n_2=8$, $\mu=0$, $\sigma_1=1, \sigma_2=1$}
		\label{p4}
		\vfill
		\includegraphics[width=0.9\textwidth]{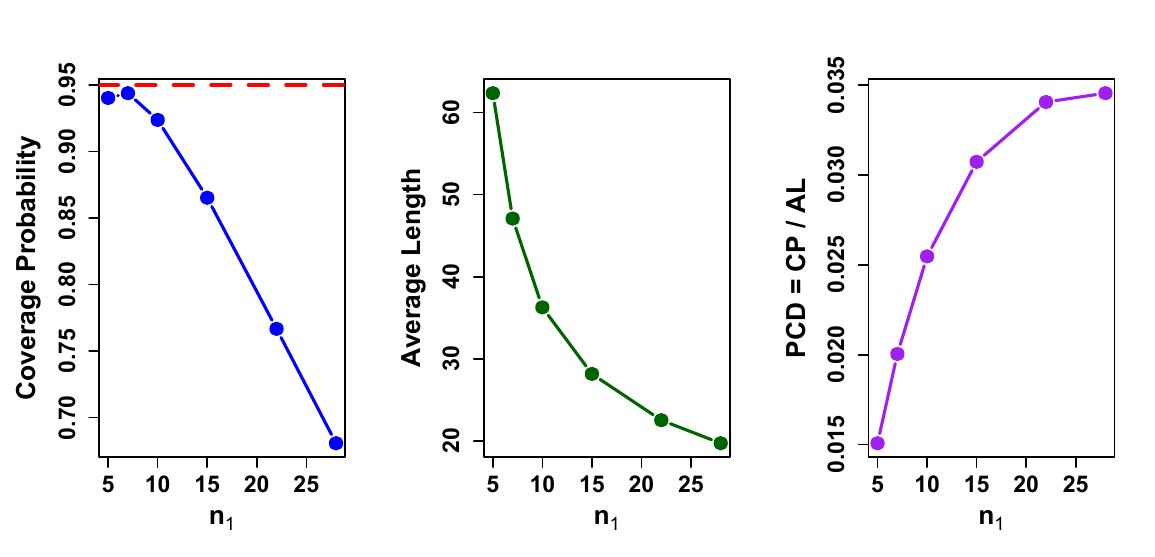}
		\caption{HPD credible interval (Gibbs) for $n_2=8$, $\mu=0$, $\sigma_1=1, \sigma_2=1$}
		\label{p5}
	\end{figure}

	\section{Real data analysis}\label{da}
	In this section, we present a real life data analysis to illustrate the applicability of the derived result for the balanced one-way random effect model.
	%\subsection{Example 1} In pharmaceutical company, the evaluation of assay methods is an important aspect of drug analysis. For a given drug, it is often necessary to estimate the between-day and within-day variance components to assess the reliability and reproducibility of the assay method. A balanced one way random effect can be used to estimate the between and within day variance components. For example, suppose that three assays $(n_2=3)$ are performed on each of three different days $(n_1=3)$ for a particular drug. The assay results are expressed as percentage recovery, defined as the ratio of the measured assay amount to the amount added to the placebo. The resulting assay data (in $\%$ recovery) are as follows:
	%\begin{table}[h!]
	%	\centering
	%	\label{d0}
	%	\begin{tabular}{ccc}
		%		\Xhline{1pt}
		%		Day 1 & Day 2 & Day 3 \\
		%		\hhline{===}
		%		101.91 & 101.86 & 102.18 \\
		%		101.26 & 101.13 & 101.20 \\
		%		100.40 & 101.18 & 101.17 \\
		%		\Xhline{1pt}
		%	\end{tabular}
	%\end{table}
	%The proposed estimated values for $\lambda$ are $\delta_{01}= 1.9596563672$, $\delta_{S1}=1.9596563672$ and  $\delta_{\varphi_2^{*2}}= 1.9596563669 $.
	%\subsection{Example 1}
	We consider the catapult experimental data reported by \cite{schubert1992catapult}. The experiment was designed to study the factors affecting the distance traveled by a ball in a catapult system. Four factors, namely hook position, arm length, start angle, and stop angle, were considered at two levels. An eight-run experimental design was used, and three replicate measurements of the response were obtained for each experimental run. Thus, the dataset consists of 24 observations. The response values reported in Table 1 of \cite{schubert1992catapult} are used for the present analysis.
	
	\begin{table}[h!]
		\centering
		\label{d01}
		\begin{tabular}{c|ccc}
			\Xhline{1pt}
			Test runs&&Replicated values for distance&\\
			r& $Y_1$ & $Y_2$ & $Y_3$ \\
			\hhline{====}
			1& 28&27.1&26.2 \\
			2&46.3&43.5&46.5 \\
			3&21.9&21&20.1 \\
			4&52.9&53.7&52\\
			5&75&73.1&74.3\\
			6&127.7&126.9&128.7\\
			7&86.2&86.5&87\\
			8&195&195.9&195.7\\
			\Xhline{1pt}
		\end{tabular}
	\end{table}
	The data consists 24 observations which come from eight experimental runs, with three replicated measurements for each run. Thus, each group contains the same number of replications, confirming that the experimental design is balanced.  To analyze the between-run and within-run sources of variation, we consider the one-way random-effects model
	$Y_{ij}=\mu+a_i+\epsilon_{ij},~~ i=1,\dots,8,~~ j=1,2,3,$
	where $a_i$ represents the random effect associated with the $i$-th run and $\epsilon_{ij}$ denotes the within-run random error. The Kolmogorov--Smirnov tests for residual normality yielded $p-$values of 0.6548, while Bartlett's tests for homogeneity of variance yielded $p-$values of 0.7445. Thus, the diagnostics provide no significant evidence of departure from the assumed normality and homogeneity conditions, supporting the use of the model for the analysis.
	
	To compare the between-run and within-run variances, we test
	$H_0:\sigma_1^2=\sigma_2^2
	\quad\text{Vs}\quad
	H_1:\sigma_1^2\ne\sigma_2^2.$ The calculated F-statistic is $F^*=2814.69$ with degrees of freedom $(7,16)$, giving a $p$-value of $3.33\times10^{-23}$. Since the $p$-value is less than $0.05$, we reject $H_0$. Thus, the between-run and within-run variances are significantly different.
	
	\begin{table}[h!]
		\centering
		\caption{Estimated values of the improved estimators of $\lambda$}
		\label{ta11}
		\begin{tabular}{c|ccc}
			\Xhline{1pt} 
			&$\delta_{0}$ & $\delta_{S}$ & $\delta_{\varphi_2^*}$ \\
			\hhline{====}
			$L_1(t)$ &6567.6117&6567.6117&6568.7412\\
			$L_2(t)$ &9851.4176&9851.4176&9805.2815\\
			$L_3(t)$ &12461.1671&12461.1671&12326.4804\\
			\Xhline{1pt} 
		\end{tabular}
	\end{table}
	
	\begin{table}[h!]
		\centering
		\caption{Computed confidence intervals with length of $\sigma_r$}
		\label{ta12}
		\begin{tabular}{cccccc}
			\Xhline{1pt} 
			&ACI & Bootstrap-t & Bootstrap-p &HPD &GCI \\
			\hhline{======}
			Lower &-657.843 &1187.009 & 848.528& 1628.145&1129.660\\
			Upper &7224.788 &16642.958 &11839.092 &31764.527 &16681.574\\
			Length &7882.631 & 15455.948 &10990.564 &30136.382 &15551.914\\
			\Xhline{1pt} 
		\end{tabular}
	\end{table}
	
	\section{Conclusion}\label{se7}
	In this paper, we studied the problem of estimating the ratio of variance components $\sigma_r$ in a balanced one-way random-effects model under a general class of scale-invariant loss functions, $L(t)$. Since the ratio $\sigma_r$ comes in the well known relation $\lambda =1 + n_2\sigma_r = E(\mbox{MSA})/E(\mbox{MSE})$. So estimating $\lambda$ is equivalent to estimating $\sigma_r$. We first characterized the class of $\mathcal{G}$-equivariant estimators of $\lambda$ and derived the BEE. We consider a broader class of estimators and, using Stein's argument, obtained an improved estimator $\delta_{S1}$ that dominates the BEE. Further, we used the IERD method of Kubokawa to obtain a smooth, improved estimator $\delta_{\phi_2^*}$ that dominates the BEE. After computing a general form of the improved estimator for $\lambda$, we compute the results for three special loss functions: quadratic loss $L_1(t)$, entropy loss $L_2(t)$, and symmetric loss $L_3(t)$.

	In addition to point estimation, we derived several interval estimators for $\sigma_r$, namely the MLE-based asymptotic confidence interval, two parametric bootstrap intervals (bootstrap-p and bootstrap-t), a generalized confidence interval based on the generalized pivotal variable approach and an HPD credible interval obtained via Gibbs sampling under Jeffreys' non-informative prior.
	
	We have conducted an extensive simulation study to compare the risk performance of the proposed improved point estimator with that of BEE across all three loss functions. For interval estimation, the simulation study compared the proposed methods using the criteria CP, AL and PCD. We can say from the simulation study that no single method uniformly dominates the others across all criteria, CP, AL and PCD. Finally, we have done a real-life data analysis to illustrate the practical applicability of both the proposed point estimators and the interval procedures.
	
	Overall, the results show that using an equivariance structure and under a general loss function gives an improvement over the standard ANOVA/MLE-based estimator of $\sigma_r$ for point estimation. In future, the work could extend this approach to unbalanced one-way random effects models, higher-way random and mixed effects models, and non-normal error structures, as well as develop Bayesian estimators under other classes of priors.

	\section*{Data availability statement}
	Data used in this study are publicly available in the literature and cited in this manuscript.
	\section*{Disclosure statement}
	No potential conflict of interest was reported by the authors.
	\section*{Funding}
	Lakshmi Kanta Patra thanks the Anusandhan National Research Foundation, India for providing financial support to carry out this research with project number MTR/2023/000229.

	\end{document}